\DeclareMathAlphabet{\mathpzc}{OT1}{pzc}{m}{it}
\newcommand{\TheTitle}{Analysis and approximation of elliptic problems with Uhlenbeck structure in convex polytopes}
\newcommand{\ShortTitle}{Uhlenbeck structure in convex polytopes}
\newcommand{\TheAuthors}{T.~Mengesha, E.~Ot\'arola, A. J.~Salgado}
\headers{\ShortTitle}{\TheAuthors}
\title{{\TheTitle}\thanks{TM has been partially supported by NSF grants DMS-1910180 and DMS-2206252. EO has been partially supported by ANID through FONDECYT project 1220156. AJS has been partially supported by NSF grant DMS-2111228.}}
\author{Tadele Mengesha\thanks{Department of Mathematics, University of Tennessee, Knoxville, TN 37996, USA.
    (\email{megnesha@utk.edu}, \url{https://math.utk.edu/people/tadele-mengesha/})}
  \and
  Enrique Ot\'arola\thanks{Departamento de Matem\'atica, Universidad T\'ecnica Federico Santa Mar\'ia, Valpara\'iso, Chile.
    (\email{enrique.otarola@usm.cl}, \url{http://eotarola.mat.utfsm.cl/}).}
  \and
  Abner J.~Salgado\thanks{Department of Mathematics, University of Tennessee, Knoxville, TN 37996, USA.
    (\email{asalgad1@utk.edu}, \url{https://math.utk.edu/people/abner-salgado/})}
}
\begin{document}

\maketitle

\begin{abstract}
We prove the well posedness in weighted Sobolev spaces of certain linear and nonlinear elliptic boundary value problems posed on convex domains and under singular forcing. It is assumed that the weights belong to the Muckenhoupt class $A_p$ with $p \in (1,\infty$). We also propose and analyze a convergent finite element discretization for the nonlinear elliptic boundary value problems mentioned above. As an instrumental result, we prove that the discretization of certain linear problems are well posed in weighted spaces.
\end{abstract}

\begin{keywords}
nonlinear elliptic equations, singular sources, Muckenhoupt weights, weighted spaces, weighted estimates, convex domains, finite element approximations, discrete stability.
\end{keywords}

\begin{AMS}
35A01,   
35A02,   
35B45,   
35J57,   
35J60,   
65N12,   
65N30.   
\end{AMS}

\section{Introduction}
\label{sec:intro}

In this paper we intend to provide an {\em analysis} and obtain {\em approximation results} for problems of the following form
\begin{equation}
\label{eq:Diening}
  -\DIV \ba(x,\GRAD u) = -\DIV \bef + g \text{ in } \Omega, \qquad u = 0 \text{ on } \partial \Omega.
\end{equation}
Here, for $d \in \{2,3\}$, $\Omega \subset \R^d$ is a convex polytope, $\ba: \Omega \times \R^d \to \R^d$ is a strongly monotone and coercive vector field which, in addition, is \emph{linear at infinity}; see Section~\ref{sub:Elliptic} below for a detailed description.  For a given measurable data ${\bf f}:\Omega\to \mathbb{R}^{d}$ and $g:\Omega\to \mathbb{R}$, by a solution to \eqref{eq:Diening} we mean any $u \in W^{1,q}_{0}(\Omega),$ for some $q>1$, such that
\begin{equation}\label{eq:Diening-sol}
  \int_{\Omega}\ba(x,\GRAD u)\cdot \GRAD \psi \diff x = \int_{\Omega} \bef \cdot \GRAD \psi \diff x + \int_{\Omega} g \psi \diff x
  \quad \forall \psi\in C_0^{\infty}(\Omega).
\end{equation}

The structural assumptions we make for the nonlinearity $\ba$ allow equation \eqref{eq:Diening-sol} to make sense even if $|\nabla u|$, $|{\bf f}|$, and $g$ lie in $L^{1}_{\text{loc}}(\Omega)$. However, it is well known that uniqueness of solutions for problem \eqref{eq:Diening} in $W^{1,1}_{0}(\Omega)$ is not guaranteed, even for linear equations with continuous coefficients; see \cite{JIN2009773,ASNSP_1964_3_18_3_385_0,MR2548032} for the existence of pathological solutions. To work with a well posed problem, we need the additional assumption that the solutions lie in  $W^{1,q}_{0}(\Omega)$, for some $q>1$.  For $q=2$, it can be shown that equation \eqref{eq:Diening-sol} has a unique solution $u\in W^{1, 2}_0(\Omega)$ with data $|{\bf f}|, g\in L^{2}(\Omega)$ under suitable monotonicity and coercivity properties for the nonlinearity ${\bf a}$; the main technique is the Browder and Minty's nonvariational method for monotone operators \cite[Section 9.1]{evans10}, \cite[Chapter 2]{MR3014456}. For the applicability of the method, it is important that the right-hand side of \eqref{eq:Diening} belongs to the dual of the solution space. Here, we are interested in the general scenario where the data might go beyond the natural duality class that aligns with the operator associated with the equation \eqref{eq:Diening}. Specifically, we assume that the data ${\bf f}$ and $g$ are rough and belong to the weighted spaces $\bL^p(\omega,\Omega)$ and $L^p(\omega,\Omega)$, respectively, for $p \in (1,\infty)$ and $\omega \in A_p$---the so-called Muckenhoupt class; see Section~\ref{sec:Notation} for notation.  As we will review shortly, functions in $W^{1,p}(\omega,\Omega)$ belong to $L^{1+\epsilon}(\Omega)$ for $\epsilon>0$ small, but not necessarily to any higher $L^{p}(\Omega)$--classes. In this case, the solution to \eqref{eq:Diening} is expected to belong in  no better space than $W^{1,q}_{0}(\Omega)$, for $1<q<2$, which makes the well posedness of equation \eqref{eq:Diening} a more difficult problem.  We refer the reader to \cite{MR3531368} for a detailed literature review and discussion on the challenges in analyzing the quasilinear equation \eqref{eq:Diening}, which can lead to very weak solutions, as they are commonly called.

This work is motivated by the results obtained in \cite{MR3531368} on the existence, uniqueness, and optimal regularity estimates of solutions for \eqref{eq:Diening} and has two main goals. The first is to prove the existence of a unique solution $u \in W^{1, p}_{0}(\omega, \Omega)$ to \eqref{eq:Diening} corresponding to data $|{\bf f}|$ and $g$ in the weighted class $L^p(\omega,\Omega)$, for any $p>1$, when the domain $\Omega$ is a {\em convex polytope}.  While elliptic problems of type \eqref{eq:Diening} have already been analyzed in various forms in the literature, which is too extensive to cite here, the applicability of the results is limited to problems over domains with either $C^1$ boundary \cite{MR3531368} or a boundary with small Lipschitz constant \cite{Adimurthi2021, BF03377367, byun2023existence}, or a Riefenberg flat boundary \cite{MENGESHA20112485}. The argument used in the aforementioned papers to obtain global estimates requires first deriving local interior and boundary estimates for solutions. The crucial near-boundary estimates in turn require local estimates for flat domains, which are then extended to the domain of interest via a flattening argument. This explains the necessity of the flatness of the boundary. However, this strategy falls short when dealing with problems in convex domains with large Lipschitz constants. Even when dealing with convex domains with small Lipschitz constants, the results of these works are not fully satisfactory due to the restriction on the range of $p$ as a function of the Lipschitz constant. In contrast, our work introduces a different approach to obtain global estimates for solutions on an arbitrary convex domain; the results can be applied for any $p>1$.

Our interest in working with convex polytopes is related to the second goal of the paper, namely the development of a finite element approximation theory for problem \eqref{eq:Diening}. Since the domain $\Omega$ is a convex polytope, it can be meshed exactly. We propose and analyze a convergent finite element discretization for problem \eqref{eq:Diening}. More precisely, we will prove that a discretization of \eqref{eq:Diening} that seeks for a solution in the finite element space of continuous piecewise linear functions defined over a quasiuniform mesh of the domain is convergent with respect to the weak topology of the weighed Sobolev space  $W^{1, p}_{0}(\omega, \Omega)$. The point here is that the approach we use allows us to approximate problems that assume the framework of \emph{very weak} solutions and, more generally, the method is applicable to any convex domain and any $p>1$ and $\omega\in A_p$.

Following the argumentation in \cite{MR3531368} (see also \cite{byun2023existence}) we show the existence of a solution $u$ to problem \eqref{eq:Diening} via an approximation argument. In this approach, the solution is constructed as the limit of a sequence of solutions for the same equation, but with smooth data that converges to the original data. Performing this procedure requires us that we first obtain an \emph{a priori} estimate for any solution $u \in W^{1,p}_{0}(\omega, \Omega)$ in terms of the $L^{p}(\omega, \Omega)$-norms of $|{\bf f}|$ and $g$. To achieve this, we rewrite equation \eqref{eq:Diening} as the following linear equation:
\begin{equation}\label{LE-introduction}
  -\DIV (\bA(x)\GRAD u) = \mathcal{F} \text{ in } \Omega, \qquad u = 0 \text{ on } \partial \Omega,
\end{equation}
where $\mathcal{F}=-\DIV \bef + g + \DIV ( \ba(x, \GRAD u) - \bA(x)\GRAD u)$.  By choosing $\bA(x)$ as a continuous matrix such that  $\bA(x)\bv$ is uniformly close to $\ba(x, \bv)$ at infinity (for large $\bv$), we will show that the right hand side $\mathcal{F}$ lies in the dual space of $W^{1,p'}_{0}(\omega', \Omega)$.  The problem of obtaining \emph{a priori} estimates for solutions to the nonlinear problem is then reduced to the search of optimal regularity estimates for solutions to linear equations of type \eqref{LE-introduction} over convex domains. This is the step where our approach differs from that of \cite{MR3531368}, which obtains the needed optimal global regularity estimates for linear problems  posed over $C^1$ domains. As already mentioned, this is not applicable for our purposes. Instead, we extend a result proved in \cite{MR1198129} about the solvability of linear equations of type \eqref{LE-introduction} on convex domains over unweighted spaces to the case of  weighted spaces. We overcome the problem related to the near-boundary estimates by using available estimates over convex domains for suitable Green's functions. Once we establish this result, the remaining steps follow the reasoning from \cite{MR3531368}.

The paper is structured as follows. We begin with Section \ref{sec:Notation}, where we lay out most of the notations used in this paper. We also define the class of Muckenhoupt 
$A_p$ weights and discuss some of the properties that are needed in this paper. In addition, this section contains all the structural conditions for the nonlinearity ${\bf a}$ that specify the type of nonlinear problems we aim to investigate. In Section \ref{sub:Elliptic}, we establish the solvability of linear elliptic problems with continuous coefficients on convex polytopes. These foundations play a role in Section \ref{sec:Quasilinear}, where we sketch the procedure for proving the existence and uniqueness of solutions to \eqref{eq:Diening}.  We conclude the paper by proposing and analyzing a convergent finite element discretization for problem \eqref{eq:Diening}.

\section{Notations, preliminaries, and structural assumptions}
\label{sec:Notation}

\subsection{Notations}
The first notation we establish is the relation $A \lesssim B$. This shall mean that $A \leq C B$ for a nonessential constant $C$, which can change at each occurrence. $A \gtrsim B$ means $B \lesssim A$, and $A \eqsim B$ is the short form for $A \lesssim B \lesssim A$. If it is necessary to explicitly mention a constant $C$, we assume that $C>0$ and that the value can change every time it occurs.

Throughout our work, $d \in \{2,3\}$ is the spatial dimension and $\Omega \subset \R^d$ is a bounded and convex polytope. The restriction $d \in \{2,3\}$ is only due to the fact that many of the results required for our purposes, for instance those related to Green's functions on convex polytopes, are only known up to dimension three ($d=3$). As soon as these become available for higher dimensions, our results will follow with little or no change. If $z \in \R^d$ and $r>0$, we denote by $\mathring{B}_r(z)$ the (open) Euclidean ball with center at $z$ and radius $r$. Its closure is $B_r(z)$. If $\bA, \bB \in \R^{d \times d}$ are symmetric, we mean by $\bA \preceq \bB$ an order in the spectral sense, \ie
\[
  \bA \bxi \cdot \bxi \leq \bB \bxi \cdot \bxi \qquad \forall \bxi \in \R^d.
\]

Vector valued functions are written in script boldface, and their spaces are also written in boldface characters. Matrix valued functions are denoted with capital boldface letters, and their spaces are written in calligraphic boldface. For example, $\bw \in \bC(\bar\Omega)$ denotes a continuous vector valued function $\bw : \bar\Omega \to \R^d$; while $\bA \in \calbC(\bar\Omega)$ means that $\bA : \bar\Omega \to \R^{d \times d}$ is continuous. If the range of a matrix valued function is contained in the space of symmetric matrices $\R^{d \times d}_\sym$, we denote it by, say, $\calbC_\sym(\bar\Omega)$. 

Let $K \subset \R^d$ be compact. We recall that Besicovitch's covering theorem (see \cite[Theorem 1.3.5]{MR1014685}, \cite[Theorem 1.27]{MR3409135}, and \cite[Theorem 1.4.6]{MR2790542}) guarantees that from any open cover $\{\calO_m\}_{m\in \calM}$ of $K$ we can extract a finite subcover with a \emph{finite overlapping property}. We shall quantify this by saying that if $\{\calO_m\}_{m=1}^M$ is such a finite subcover satisfying $\max\{ \diam\calO_m : m \in \{1,\dots,M\} \} \leq \delta$ and $\{ \varphi_m \}_{m=1}^M \subset C_0^\infty(\R^d)$ is a partition of unity subject to $\{\calO_m\}_{m=1}^M$, then we have
\begin{equation}
\| D^k \varphi_m \|_{\bL^\infty(\R^d;\R^{d^k})} \leq C_{\varphi,k} \delta^{-k},
\quad
\sup_{x \in \bar\Omega} \#\left\{ m \in \{ 1, \ldots, M\} : x \in \supp \varphi_m \right\} \leq \mathcal{N},
\label{eq:scaling_and_Besicovitch}
\end{equation}
where $\mathcal N$ depends only on $d$.

\subsection{$A_p$ weights}
Whenever $w : \R^d \to \R$ is locally integrable with respect to the Lebesgue measure and $E \subset \R^d$ has a positive and finite measure, we set
\[
  \fint_E w(x) \diff x = \frac1{|E|} \int_E w(x) \diff x.
\]
By a weight we mean a locally integrable function $\omega : \R^d \to \R$ which is positive almost everywhere. Given a weight $\omega$ and $p \in [1,\infty)$, we set $L^p(\omega,\Omega)$ to be the Lebesgue space of integrability index $p$ on $\Omega$ with respect to the measure $\omega \diff x$, \ie
\[
  L^p(\omega,\Omega) = \left\{ f : \Omega \to \R : \| f \|_{L^p(\omega,\Omega)} = \left( \int_\Omega |f(x)|^p \omega(x) \diff x \right)^{\frac{1}{p}} < \infty \right \}.
\]

For $p \in [1,\infty)$ we say that a weight $\omega$ belongs to the Muckenhoupt class $A_p$ if there is a constant $C>0$ such that for every ball $B \subset \R^d$ \cite[Definition 1.2.2]{MR1774162}
\begin{align*}
  \left( \fint_B \omega(x) \diff x \right) \left( \fint_B \omega(x)^{-\tfrac1{p-1}} \diff x \right)^{p-1} \leq C, && p &> 1,
  \\
  \left( \fint_B \omega(x) \diff x \right) \sup_{x \in B} \frac{1}{\omega(x)} \leq C, && p &= 1.
\end{align*}
We call the best constant in the previous inequalities the Muckenhoupt characteristic of $\omega$ and denote it as $[\omega]_{A_p}$. In addition, we define $A_{\infty} = \cup_{p\geq1} A_p$. Note that, 
\[
  \omega \in A_p \iff \omega' = \omega^{-\tfrac1{p-1}} \in A_{p'}, 
 \qquad
 [\omega]_{A_p} = [\omega']_{A_{p'}}^{p-1};
\]
for $p \in (1,\infty)$ \cite[Remark 1.2.4]{MR1774162}, where we denoted the H\"older conjugate of $p \in (1,\infty)$ by $p'$. Many useful properties follow from the fact that $\omega \in A_p$ ($p \in [1,\infty)$). We mention here those that will be useful for us in the following:
\begin{enumerate}[(i)]
  \item $L^p(\omega,\Omega) \subset L^1_{\text{loc}}(\Omega)$ \cite[(1.2.1)--(1.2.2)]{MR1774162}. In particular,  elements of $L^p(\omega,\Omega)$ are distributions and we can talk about their distributional derivatives.
  
  \item \emph{Reverse H\"older inequality}: There exist constants $\vare >0$ and $C>0$, depending only on $p$ and $[\omega]_{A_p}$, such that for any ball $B \subset \mathbb{R}^d$,
  \begin{equation}
    \left( \fint_B \omega(x)^{1+\vare} \diff x \right)^{1/(1+\vare)} \leq C \fint_B \omega(x) \diff x;
  \label{eq:reverse_Holder_inequality}
  \end{equation}
  see \cite[Theorem 7.4]{MR1800316} and \cite[Lemma 1.2.12]{MR1774162}. From this it first follows that $L^p(\omega,\Omega) \subset L^{1+\epsilon}(\Omega)$ for a sufficiently small $\epsilon >0$. Furthermore, we can conclude that for $p \in (1,\infty)$ and $\omega \in A_p$, a function $v$ belongs to $L^p(\omega,\Omega)$ provided that $v \in L^q(\Omega)$ for $q = p(1+\vare)/\vare > 1$; $\vare$ is such that \eqref{eq:reverse_Holder_inequality} holds.
  
  \item \emph{Embedding}: If $1 \leq p < q < \infty$, then $A_p \subset A_q$; see \cite[Remark 1.2.4]{MR1774162}.
  
  \item \emph{Open ended property}: If $\omega \in A_p$ with $p\in (1,\infty)$, then there is $\delta >0$ such that $\omega \in A_{p-\delta}$; see \cite[Corollary 1.2.17]{MR1774162} and \cite[Corollary 7.6, item (2)]{MR1800316}.
  
  \item \emph{Lattice property}: If $\omega_1, \omega_2 \in A_p$, then $\min\{ \omega_1, \omega_2 \}$ and $\max\{\omega_1,\omega_2\}$ belong to $A_p$.
  
  \item \label{itemvi}
  \emph{Factorization}: Let $w \in L^1_{\text{loc}}(\R^d)$ be a nonnegative function that satisfies $\calM[w] < \infty$ almost everywhere and let $k$ be a nonnegative function such that $k, k^{-1} \in L^{\infty}(\mathbb{R}^d)$. Then, for every $\vare \in (0,1)$,
   \[
    \omega(x) = k(x) \calM[w](x)^\vare
  \]
  is an $A_1$ weight; see \cite[Theorem 7.2.7]{MR3243734} and also \cite[Theorem 7.7]{MR1800316}. Here, $\calM$ denotes the Hardy--Littlewood maximal function. Moreover, for all $p \in (1,\infty)$ and all $\alpha \in (0,1)$, there holds $\calM[w]^{-\alpha(p-1)} \in A_p$ \cite[Lemma 3.2]{MR3531368}.
\end{enumerate}
We refer the reader to \cite{MR1774162,MR1800316,MR2305115,MR3243734} for more details on Muckenhoupt weights and weighted-norm inequalities.

As already mentioned, for any $p \in [1,\infty)$ and $\omega \in A_p$, the elements of $L^p(\omega,\Omega)$ are distributions. Thus, it makes sense to talk about their distributional derivatives and define weighted Sobolev spaces:
\begin{align*}
   W^{1,p}(\omega,\Omega) &= \left\{ w \in L^p(\omega,\Omega) : \partial_i w \in L^p(\omega,\Omega) \, \forall i \in \{ 1, \ldots, d \} \right\},
\end{align*}
endowed with $\| w \|_{W^{1,p}(\omega,\Omega)} = \left( \| w \|_{L^{p}(\omega,\Omega)} + \| \GRAD w \|_{\bL^{p}(\omega,\Omega)} \right)^{\frac{1}{p}}$. We define $W^{1,p}_0(\omega,\Omega)$ as the closure of $C_0^{\infty}(\Omega)$ in $W^{1,p}(\omega,\Omega)$. Due to the fact that the weight $\omega \in A_p$ many of the properties of the classical Sobolev spaces extend to the weighted ones \cite{MR1774162,MR2491902,MR0802206}. In particular, we have a weighted Poincar\'e inequality: if $D \subset \R^d$ is open and bounded, $p \in (1,\infty)$, and $\omega \in A_p$, then \cite[Theorem 1.3]{MR0643158}
\begin{equation}
\label{eq:wPoincare}
  \| w \|_{L^p(\omega,D)} \leq {C_p} \diam (D) \| \GRAD w \|_{\bL^p(\omega,\Omega)} \quad \forall w\in W^{1,p}_0(\omega,\Omega).
\end{equation}
The constant {$C_p$} depends on $\omega$ only through $[\omega]_{A_p}$. Finally, the dual of $W^{1,p'}_0(\omega',\Omega)$ will be denoted by $W^{-1,p}(\omega,\Omega)$.

\subsection{Assumptions on the nonlinearity}

We now turn our attention to \eqref{eq:Diening}. Our goal is to find a set of conditions for the nonlinearity and the variable coefficient that guarantee the existence of a unique solution, together with a weighted stability estimate. For this purpose,
we shall assume that $\ba : \Omega \times \R^d \to \R^d$ satisfies the following conditions.
\begin{enumerate}[$(A)$]
  \item \label{item:A}
  \emph{Carath\'eodory mapping}: $\ba : \Omega \times \R^d \to \R^d$ is Carath\'eodory. In other words $\ba(\cdot,\bv)$ is measurable for any fixed $\bv \in \mathbb{R}^d$ and $\ba(x,\cdot)$ is continuous for almost all $x \in \Omega$.

  \item \emph{Coercivity}: There is $\alpha >0$ such that for almost all $x \in \Omega$ and every $\bv \in \R^d$,
  \begin{equation}
  \label{eq:baiscoercive}
    \alpha |\bv|^2 \leq \ba(x,\bv) \cdot \bv.
  \end{equation}
  
  \item \emph{Growth}: There is $\Lambda >0$ such that for almost all $x \in \Omega$ and all $\bv \in \R^d$,
  \begin{equation}
  \label{eq:baisgrowth}
    |\ba(x,\bv)| \leq \Lambda |\bv|.
  \end{equation}

  \item \emph{Strict monotonicity}: For almost all $x \in \Omega$ and every $\bv, \bw \in \R^d$ such that $\bv \neq \bw$,
  \begin{equation}
  \label{eq:baismonotone}
    \left( \ba(x,\bv) - \ba(x,\bw) \right) \cdot (\bv - \bw ) >0.
  \end{equation}
  
  \item \emph{$\ba$ is asymptotically Uhlenbeck}: There exists $\bA \in \calbC_\sym(\bar{\Omega})$ 
  such that, for all $\vare>0$, there is $N>0$, such that for almost all $x\in \Omega$ and all $\bv \in \R^d$ satisfying $|\bv| \geq N$, we have
  \begin{equation}
  \label{eq:baisUhlenbeck}
    \left|\ba(x,\bv) - \bA(x)\bv\right| \leq \vare |\bv|.
  \end{equation}
  
  \item
  \label{item:F}
  \emph{$\ba$ is strongly asymptotically Uhlenbeck}: There exists $\bA \in \calbC_\sym( \bar{\Omega})$ such that, for all $\vare>0$, there is $N>0$, such that for almost all $x\in \Omega$ and all $\bv \in \R^d$ satisfying $|\bv| \geq N$, we have
  \begin{equation}
      \left| \partial_\bv \ba(x,\bv) - \bA(x) \right| \leq \vare.
    \end{equation}
\end{enumerate}

\begin{remark}[structure conditions]
A prototypical example of a mapping $\ba$ that satisfies the asymptotically Uhlenbeck structure condition is
\[
  \ba(x,\bv) = a(x,|\bv|) \bv: \quad \lim_{t \uparrow \infty} a(x,t) = \tilde{a}(x) \mae x \in \Omega,
\]
with $\tilde{a} \in C(\bar\Omega)$; see the discussion at the beginning of Section 2 in \cite{MR3531368}. Note that the coefficient $a$ is allowed to be merely measurable in $x$ and the required continuity must hold only for $\tilde a$. The mapping $\ba$ is strongly asymptotically Uhlenbeck if $a$ is differentiable with respect to $t$ for $t \gg 1$ and $|\partial_t a(x,t) t | \rightarrow 0$ as $t \uparrow \infty$. We refer the reader to \cite{MR3531368} and our introduction for further motivation.
\end{remark}

\section{Linear elliptic problems with continuous coefficients}
\label{sub:Elliptic}
We begin the analysis of \eqref{eq:Diening} with the linear case, \ie
\begin{equation}
\label{eq:Aislinear}
  \begin{dcases}
    \ba(x,\bv) = \bA(x)\bv & \forall (x,\bv) \in \bar\Omega\times \R^d, \\
    \bA \in \calbC_\sym(\bar\Omega), & \| \bA \|_{\calbC(\bar\Omega)} \leq \Lambda, \\
    \exists \alpha >0 : \alpha \bI \preceq \bA(x) & \forall x \in \bar\Omega.
  \end{dcases}
\end{equation}

As a first preparatory step, let us recall some known results. We begin with the simplest variant of \eqref{eq:Diening}, \ie that of the Laplacian: $\ba(x,\bv) = \bv$. This particular case is studied in \cite{DDO:17}. In particular, the following proposition is proved in \cite[Corollary 2.7]{DDO:17}; see also \cite{MR1156467}.

\begin{proposition}[weighted stability]
\label{prop:weightedLaplace}
Let $D \subset \R^d$ be bounded and convex. Let $p \in (1,\infty)$ and $\omega \in A_p$. For every $\calF \in W^{-1,p}(\omega,D)$, there is a unique $U \in W^{1,p}_0(\omega,D)$ such that $-\Delta U = \calF$ in $\mathscr{D}'(D)$. Moreover,
\[
  \|\GRAD U \|_{\bL^p(\omega,D)} \leq C_\Delta \| \calF \|_{W^{-1,p}(\omega,D)},
\]
where $C_\Delta$ does not depend on $D$ and depends on $\omega$ only through $[\omega]_{A_p}$.
\end{proposition}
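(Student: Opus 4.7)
The plan is to reduce the statement to a weighted Calder\'on--Zygmund estimate for the Dirichlet Green's function of $-\Delta$ on $D$, and then invoke the standard Coifman--Fefferman theory for $A_p$ weights. First, by the weighted Poincar\'e inequality \eqref{eq:wPoincare} the norm on $W^{1,p}_0(\omega,D)$ is equivalent to $\|\GRAD \cdot\|_{\bL^p(\omega,D)}$, and a Hahn--Banach argument represents any $\calF \in W^{-1,p}(\omega,D)$ as $\calF = -\DIV\bef$ with $\bef \in \bL^p(\omega,D)$ and $\|\bef\|_{\bL^p(\omega,D)} \lesssim \|\calF\|_{W^{-1,p}(\omega,D)}$. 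Hence it suffices to prove, for smooth $\bef$ and then by density, that the unique smooth solution $U$ of $-\Delta U = -\DIV\bef$ with $U=0$ on $\partial D$ satisfies $\|\GRAD U\|_{\bL^p(\omega,D)} \lesssim \|\bef\|_{\bL^p(\omega,D)}$.

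For such $\bef$ the $H^1_0 \cap H^2$ theory on convex domains (Grisvard/Kadlec) delivers a classical solution $U$, and the Green's function $G_D$ of $-\Delta$ on $D$ produces, after one integration by parts, the representation
\[
  \GRAD U(x) = \int_D \nabla_x \nabla_y G_D(x,y)\, \bef(y)\diff y =: T\bef(x).
\]
The crucial input is that on any bounded convex domain the Green's function obeys the Calder\'on--Zygmund pointwise bounds
\[
  |\nabla_x^i \nabla_y^j G_D(x,y)| \lesssim |x-y|^{2-d-i-j}, \qquad 0 \le i+j \le 2,
\]
together with the matching H\"older continuity off the diagonal, with constants depending only on $d$. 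These estimates rely essentially on convexity (through supporting hyperplanes and reflection arguments) and are due, in various forms, to Gr\"uter--Widman, Fromm, and Jerison--Kenig. Extending $\bef$ by zero to $\R^d$ makes $T$ a standard Calder\'on--Zygmund operator, so by Coifman--Fefferman it is bounded on $\bL^p(\omega,\R^d)$ for every $p \in (1,\infty)$ and $\omega \in A_p$, with operator norm depending only on $d$, $p$, and $[\omega]_{A_p}$. Restricting back to $D$ yields the asserted estimate, and the constant is independent of $D$ because the kernel bounds above are.

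Uniqueness follows by duality. Suppose $U \in W^{1,p}_0(\omega,D)$ satisfies $\int_D \GRAD U \cdot \GRAD \phi \diff x = 0$ for all $\phi \in C_0^\infty(D)$. Given $w \in C_0^\infty(D)$, the existence half applied with the dual exponent $p'$ and weight $\omega' \in A_{p'}$ produces $\psi \in W^{1,p'}_0(\omega',D)$ solving $-\Delta \psi = w$ in $\mathscr{D}'(D)$. By density of $C_0^\infty(D)$ in the respective weighted Sobolev spaces, and by the weighted H\"older inequality which pairs $\bL^p(\omega,D)$ with $\bL^{p'}(\omega',D)$, both weak formulations extend to test functions in $W^{1,p'}_0(\omega',D)$ and $W^{1,p}_0(\omega,D)$, respectively. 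Testing each equation against the solution of the other yields $\int_D U w \diff x = \int_D \GRAD U \cdot \GRAD \psi \diff x = 0$; varying $w$ forces $U \equiv 0$.

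The main obstacle is the kernel estimate invoked above: establishing the Calder\'on--Zygmund bounds on $\nabla_x\nabla_y G_D$ uniformly in the convex domain $D$, with constants depending only on $d$. The near-boundary estimates are the delicate part and rely in an essential way on convexity; this is precisely what lets the argument go through on arbitrary convex polytopes, in contrast with the $C^1$-boundary hypothesis used in \cite{MR3531368}. Once these bounds are in place the rest is standard weighted singular integral theory, and could be further streamlined via Rubio de Francia extrapolation from the unweighted case.
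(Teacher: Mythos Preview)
The paper does not prove this proposition; it is quoted directly from \cite[Corollary~2.7]{DDO:17} (with a pointer also to \cite{MR1156467}), so there is no in-paper argument to compare your sketch against.

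Your strategy---represent $\GRAD U$ through the kernel $\nabla_x\nabla_y G_D$ and apply Coifman--Fefferman---is a legitimate route to the result, and you are right to single out the pointwise kernel estimates as the heart of the matter. However, the attributions are loose: Gr\"uter--Widman control $G_D$ and $\nabla G_D$ but not the mixed second derivative; Fromm \cite{MR1198129} proves the unweighted $L^q$ mapping property $\|\GRAD U\|_{L^q}\lesssim\|\bef\|_{L^q}$ for every $q\in(1,\infty)$ on convex domains, but gives no pointwise kernel bound; and the Jerison--Kenig work concerns Lipschitz rather than specifically convex domains and again does not supply the mixed second-derivative estimate you need. The uniform bound $|\nabla_x\nabla_y G_D(x,y)|\lesssim|x-y|^{-d}$ on arbitrary bounded convex $D$, together with the off-diagonal H\"older regularity required for a standard kernel, is true but is not a one-line citation; establishing it near vertices and edges (where the regular part of $G_D$ fails to be smooth) requires its own argument, for instance via explicit wedge or cone Green's functions. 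A second, smaller gap: extending $\bef$ by zero does not by itself make $T$ a standard Calder\'on--Zygmund operator on $\R^d$, since the kernel lives only on $D\times D$ and its regularity across $\partial D$ must be addressed, or one must invoke a domain-adapted CZ theory. None of this invalidates the plan, but as written your proof assumes precisely the hard analytic input that carries the result.
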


\label{sec:TheLinearPb}

The following is the the most important result of this section, which extends the solvability of the linear elliptic equation obtained in \cite{MR1198129} to the weighted case.

\begin{theorem}[well posedness]
\label{thm:LinearWellPosed}
Let $d \in \{2,3\}$ and let $\Omega \subset \R^d$ be a bounded and convex polytope. Assume that $\ba$ satisfies \eqref{eq:Aislinear}. Let $p \in (1,\infty)$ and let $\omega \in A_p$. Then, for any $\bef \in \bL^p(\omega,\Omega)$ and $g \in L^p(\omega,\Omega)$, problem \eqref{eq:Diening} has a unique solution $u \in W^{1,p}_0(\omega,\Omega)$. Moreover, $u$ satisfies
\begin{equation}
\label{eq:stabilityLinear}
  \| \GRAD u \|_{\bL^p(\omega,\Omega)} \lesssim \| \bef \|_{\bL^p(\omega,\Omega)} + \| g \|_{L^p(\omega,\Omega)}.
\end{equation}
The hidden constant depends on $\omega$ only through its Muckenhoupt characteristic $[\omega]_{A_p}$ and on the coefficient $\bA$ only through $\alpha$ and $\Lambda$.
\end{theorem}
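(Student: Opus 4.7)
The plan is to reduce the variable-coefficient problem, locally, to the Laplace equation on convex subdomains, where Proposition~\ref{prop:weightedLaplace} applies directly. For smooth data $(\bef,g)$, existence and uniqueness of a solution $u\in W^{1,2}_0(\Omega)$ follow from the Lax--Milgram lemma. The crux is to establish the a priori bound \eqref{eq:stabilityLinear}: uniqueness then follows by linearity, and existence for rough data by a density plus weak compactness argument.

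To derive the a priori estimate, exploit the uniform continuity of $\bA$ on $\bar\Omega$: given $\eta>0$ there is $\delta>0$ with $|\bA(x)-\bA(y)|<\eta$ whenever $|x-y|<\delta$. Cover $\bar\Omega$ by balls of diameter at most $\delta$; Besicovitch's theorem yields a finite subcover $\{B_m\}_{m=1}^M$ with overlap $\mathcal{N}$, and \eqref{eq:scaling_and_Besicovitch} provides a subordinate partition of unity $\{\varphi_m\}$ with $\|\GRAD\varphi_m\|_\infty\lesssim\delta^{-1}$. The subdomain $\Omega_m = B_m\cap\Omega$ is convex, being the intersection of two convex sets. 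The localized function $u_m = \varphi_m u$, extended by zero, lies in $W^{1,p}_0(\omega,\Omega_m)$ and satisfies distributionally on $\Omega_m$ an equation of the form
\begin{equation*}
-\DIV(\bA_m \GRAD u_m) = -\DIV\bigl(\varphi_m\bef + u\bA\GRAD\varphi_m + (\bA_m-\bA)\GRAD u_m\bigr) + \varphi_m g + (\bef - \bA\GRAD u)\cdot\GRAD\varphi_m,
\end{equation*}
where $\bA_m := \bA(x_m)$. The affine change of variables $y = \bA_m^{-1/2}x$ maps $\Omega_m$ to a bounded convex set $\tilde\Omega_m$, reduces the principal part to $-\Delta_y$, and transforms $\omega$ to an $A_p$ weight with characteristic comparable to $[\omega]_{A_p}$ (with constants depending only on $\alpha,\Lambda$). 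Applying Proposition~\ref{prop:weightedLaplace} on $\tilde\Omega_m$ and unwinding the transformation yields a bound on $\|\GRAD u_m\|_{\bL^p(\omega,\Omega_m)}$ in terms of appropriate $W^{-1,p}(\omega,\Omega_m)$ norms of the right-hand side. Zeroth-order source terms in $L^p(\omega,\Omega_m)$ embed into $W^{-1,p}(\omega,\Omega_m)$ via the weighted Poincar\'e inequality \eqref{eq:wPoincare}, picking up factors of $\diam(\Omega_m)\lesssim\delta$ that compensate the $\delta^{-1}$ from $\|\GRAD\varphi_m\|_\infty$; the freezing error $(\bA-\bA_m)\GRAD u_m$, bounded pointwise by $\eta|\GRAD u_m|$, is absorbed into the left-hand side by taking $\eta$ small. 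Summing over $m$ using finite overlap, together with the global weighted Poincar\'e inequality to bound $\|u\|_{L^p(\omega,\Omega)}$ by $\|\GRAD u\|_{\bL^p(\omega,\Omega)}$, then closes the estimate.

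The hard part will be controlling the terms containing $\GRAD u$ coupled with $\GRAD\varphi_m$: after localization they contribute on the right-hand side a quantity of the same form as what is being estimated, with no obvious small factor attached. Closing the bound requires either a careful accounting of how these contributions combine over the cover---using that $\GRAD\varphi_m$ is supported on the annular region where $\varphi_m$ transitions and that $\sum_m\GRAD\varphi_m\equiv 0$, so that after summation only the freezing error survives, which is small---or a complementary compactness argument: were \eqref{eq:stabilityLinear} to fail, a suitably normalized sequence of solutions would converge weakly in $W^{1,p}_0(\omega,\Omega)$ and, by the compact embedding, strongly in $L^p(\omega,\Omega)$, producing a nontrivial solution of the homogeneous problem and contradicting uniqueness. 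Once \eqref{eq:stabilityLinear} is in hand, existence for general $(\bef,g)\in\bL^p(\omega,\Omega)\times L^p(\omega,\Omega)$ follows by approximating the data by smooth functions, solving each smoothed problem by Lax--Milgram in $W^{1,2}_0(\Omega)$, and extracting a weak limit in $W^{1,p}_0(\omega,\Omega)$ using the uniform bound.
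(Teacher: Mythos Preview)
Your overall strategy---localize, freeze the coefficient, reduce to the Laplacian on convex subdomains via an affine change of variables, then pass to the limit for rough data---matches the paper's. The gap is in the handling of the zeroth-order commutator source $-\bA\GRAD u\cdot\GRAD\varphi_m$ on each $\Omega_m$. After the local Poincar\'e inequality the factor $\delta$ cancels the $\delta^{-1}$ from $|\GRAD\varphi_m|$, leaving a contribution of size $\Lambda\|\GRAD u\|_{\bL^p(\omega,\Omega_m)}$ with an $O(1)$ coefficient. Your option (a) does not rescue this: the pointwise identity $\sum_m\GRAD\varphi_m=0$ cancels the sum of the source \emph{functions}, but you are summing $p$-th powers of local norms $\|\GRAD u_m\|_{\bL^p(\omega,\Omega_m)}$, each bounded individually by its own local right-hand side; once norms are taken patch by patch the cancellation is gone (and summing the equations instead is no better, since the frozen matrices $\bA_m$ differ from patch to patch). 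Your option (b) needs, to yield a contradiction, a G\aa rding-type inequality $\|\GRAD u\|\lesssim\|\bef\|+\|g\|+\|u\|_{L^p(\omega,\Omega)}$, so that strong $L^p(\omega,\Omega)$ convergence of a normalized sequence to zero forces $\|\GRAD u_n\|\to 0$; as written you do not have such an inequality, because the obstruction is a $\|\GRAD u\|$ term, not a lower-order $\|u\|$ term.

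The missing step is an integration by parts on the frozen part of the commutator. Write $-\bA\GRAD u\cdot\GRAD\varphi_m=-\bA_m\GRAD u\cdot\GRAD\varphi_m-(\bA-\bA_m)\GRAD u\cdot\GRAD\varphi_m$; the second piece is $O(\eta)$ and absorbable. For the first, since $\bA_m$ is constant, integrating by parts against a test function $v$ gives $\int_{\Omega_m} u\bigl(\bA_m\GRAD\varphi_m\cdot\GRAD v+v\,\bA_m\!:\!D^2\varphi_m\bigr)\diff x$, bounded by $C\Lambda\delta^{-1}\|u\|_{L^p(\omega,\Omega_m)}\|\GRAD v\|_{\bL^{p'}(\omega',\Omega_m)}$. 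This is precisely what the paper does in Lemma~\ref{lem:Garding}, yielding the G\aa rding inequality $\|\GRAD u\|\lesssim\|\bef\|+\|g\|+\|u\|_{L^p(\omega,\Omega)}$. To remove the lower-order term the paper does not use compactness but a duality argument based on pointwise bounds for the Green's function of $-\DIV(\bA\GRAD\cdot)$ on convex domains together with weighted boundedness of the associated singular integral (Corollary~\ref{co:aprioriForSmooth}); this keeps explicit control of the constant in terms of $[\omega]_{A_p}$, $\alpha$, and $\Lambda$. Once the G\aa rding step is fixed, your compactness alternative would also close the argument, at the price of a less explicit constant.
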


The proof follows the argumentation in \cite{MR1198129}. We prove an a priori estimate in weighted spaces via the method of localization by freezing the coefficient and using the results of Proposition~\ref{prop:weightedLaplace}. We then prove the existence of a solution as the limit of a sequence of solutions to the same problem, but with smooth data converging to the original data. We have divided the proof into several steps in which we prove several independent statements. We begin with an a priori estimate.

\begin{lemma}[G\aa{}rding--like inequality]
\label{lem:Garding}
Let $d \in \{2,3\}$ and let $\Omega \subset \R^d$ be a bounded and convex polytope. Let $\bB \in \calbC^\infty_\sym(\bar\Omega)$ be such that there is $\alpha >0$ for which
\begin{equation}
\label{eq:alpha}
 \alpha \bI \preceq \bB(x) \quad \forall x \in \bar\Omega.
\end{equation}
Let $p \in (1,\infty)$ and $\omega \in A_p$. Given $\bmu \in \bL^p(\omega,\Omega)$ and $\gamma \in L^p(\omega,\Omega)$, assume that $\fraku \in W^{1,p}_0(\omega,\Omega)$ satisfies
\[
  -\DIV ( \bB \GRAD \fraku )= -\DIV \bmu + \gamma \quad \text{ in } \mathscr{D}'(\Omega).
\]
Then,
\begin{equation}
\label{eq:first_weighted_bound}
  \| \GRAD \fraku \|_{\bL^p(\omega,\Omega)} \lesssim \| \bmu \|_{\bL^p(\omega,\Omega)} + \| \gamma \|_{L^p(\omega,\Omega)} + \| \fraku \|_{L^p(\omega,\Omega)}.
\end{equation}
The hidden constant depends on the weight $\omega$ only through its Muckenhoupt characteristic $[\omega]_{A_p}$ and on the coefficient $\bB$ only through the norm $\|\bB\|_{\calbC(\bar\Omega)}$ and $\alpha$.
\end{lemma}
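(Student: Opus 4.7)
The plan is a classical freezing-of-coefficients localization, reducing on each ball to the weighted $L^{p}$ estimate for the Laplacian on convex domains given by Proposition~\ref{prop:weightedLaplace}. Since $\bB\in\calbC^\infty_\sym(\bar\Omega)$ is uniformly continuous, given $\eta>0$ we pick $\delta=\delta(\eta)>0$ so that $\|\bB(x)-\bB(y)\|\leq\eta$ whenever $|x-y|\leq\delta$. By Besicovitch's covering theorem we obtain a finite cover $\{\mathring B_\delta(x_m)\}_{m=1}^{M}$ of $\bar\Omega$ with the finite overlap property and a smooth partition of unity $\{\varphi_m\}$ subordinate to it satisfying the scaling bounds \eqref{eq:scaling_and_Besicovitch}. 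Setting $\bB_m:=\bB(x_m)$, a direct expansion of $\DIV(\bB\GRAD(\varphi_m\fraku))$ combined with the equation for $\fraku$ and the addition/subtraction of $\bB_m$ yields an identity of the form
\[
  -\DIV(\bB_m\GRAD(\varphi_m\fraku)) = -\DIV\,\mathbf{G}_m + h_m\quad\text{in }\mathscr{D}'(\Omega),
\]
with
\begin{align*}
  \mathbf{G}_m &= \varphi_m\bmu + 2\fraku\,\bB\GRAD\varphi_m + (\bB-\bB_m)\GRAD(\varphi_m\fraku),\\
  h_m &= \GRAD\varphi_m\cdot\bmu + \varphi_m\gamma + \fraku\,\DIV(\bB\GRAD\varphi_m).
\end{align*}
The crucial algebraic step is to exploit the symmetry of $\bB$ to rewrite the offending cross term $-2\,\bB\GRAD\varphi_m\cdot\GRAD\fraku$ as $-2\DIV(\fraku\,\bB\GRAD\varphi_m)+2\fraku\,\DIV(\bB\GRAD\varphi_m)$; this expels $\GRAD\fraku$ from the right-hand side, which is essential for the absorption step below.

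Next, because $\bB_m$ is a constant symmetric positive definite matrix, the affine change of variables $y=\bB_m^{-1/2}(x-x_m)$ turns $-\DIV(\bB_m\GRAD\cdot)$ into $-\Delta_y$, maps $\Omega$ onto another bounded convex set, and transforms $\omega$ into a weight $\tilde\omega\in A_p(\R^d)$ whose characteristic depends only on $[\omega]_{A_p}$, $\alpha$, and $\Lambda$. Applying Proposition~\ref{prop:weightedLaplace} in the new variables, using \eqref{eq:wPoincare} to control $\|h_m\|_{W^{-1,p}(\omega,\Omega)}$ by $\diam(\Omega)\,\|h_m\|_{L^{p}(\omega,\Omega)}$, the oscillation bound $\|\bB-\bB_m\|_{\calbC(\supp\varphi_m)}\leq\eta$, and the scaling \eqref{eq:scaling_and_Besicovitch}, we arrive at
\[
  \|\GRAD(\varphi_m\fraku)\|_{\bL^{p}(\omega,\Omega)} \leq C\eta\,\|\GRAD(\varphi_m\fraku)\|_{\bL^{p}(\omega,\Omega)} + C(\delta)\bigl(\|\bmu\|_{\bL^{p}(\omega,\supp\varphi_m)} + \|\gamma\|_{L^{p}(\omega,\supp\varphi_m)} + \|\fraku\|_{L^{p}(\omega,\supp\varphi_m)}\bigr).
\]
For $\eta$ small enough the first term on the right is absorbed on the left.

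Finally, since $\sum_m\varphi_m\equiv 1$ on $\bar\Omega$ we have $\GRAD\fraku=\sum_m\GRAD(\varphi_m\fraku)$; raising to the $p$-th power, summing over $m$, and invoking the finite overlap bound in \eqref{eq:scaling_and_Besicovitch} twice delivers \eqref{eq:first_weighted_bound}. The two delicate points I expect are: (i) the algebraic rewriting that exploits $\bB=\bB^{\top}$ to eliminate $\GRAD\fraku$ from the localized right-hand side --- without it, $\|\GRAD\fraku\|_{\bL^{p}(\omega,\Omega)}$ would reappear with no small prefactor and could not be absorbed --- and (ii) verifying that the affine reduction to $-\Delta$ preserves the convexity of the domain and the $A_p$ character of the weight with constants depending on $\bB$ only through $\alpha$ and $\Lambda$.
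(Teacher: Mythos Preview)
Your proposal follows essentially the same route as the paper: freezing of coefficients via a Besicovitch cover and partition of unity, an affine change of variables reducing the frozen-coefficient operator to $-\Delta$ on a convex set, application of Proposition~\ref{prop:weightedLaplace}, absorption of the small-oscillation term, and summation using finite overlap. The two ``delicate points'' you flag are exactly the ones the paper handles, and your treatment of the affine reduction (convexity preserved, $A_p$ characteristic controlled via $\alpha,\Lambda$) is correct.

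There is, however, one substantive discrepancy. In your localized identity the zeroth-order part $h_m$ contains the term $\fraku\,\DIV(\bB\GRAD\varphi_m)$, which involves first derivatives of $\bB$. Consequently your hidden constant depends on $\|\bB\|_{\calbC^1(\bar\Omega)}$, not merely on $\|\bB\|_{\calbC(\bar\Omega)}$ and $\alpha$ as the lemma asserts. The paper avoids this by carrying out the cross-term integration by parts with the \emph{frozen} constant matrix $\bB_m$, producing $\fraku\,\bB_m\!:\!D^2\varphi_m$ instead; the $(\bB_m-\bB)$ contribution then sits entirely in the small-oscillation term $\int_{\Omega_m}(\bB_m-\bB)\GRAD\fraku\cdot\GRAD(\varphi_m v)\,\diff x$, which is absorbable because the paper works on the local patch $\Omega_m=\Omega\cap B_\delta(x_m)$ and uses the local Poincar\'e inequality with constant proportional to $\delta$. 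This sharper $\calbC$-only dependence is not cosmetic: it is precisely what allows the approximation $\bA_k\to\bA$ in $\calbC(\bar\Omega)$ (with no control on $\GRAD\bA_k$) to go through in the proof of Theorem~\ref{thm:LinearWellPosed}. To repair your argument, either switch to the weak-form bookkeeping the paper uses, or replace $\bB$ by $\bB_m$ in the integration-by-parts step and localize to $\Omega_m$ so the residual $((\bB-\bB_m)\GRAD\fraku)\cdot\GRAD\varphi_m$ picks up the compensating factor $\delta$ from Poincar\'e and remains absorbable. (There is also a harmless sign slip: the perturbative piece in $\mathbf{G}_m$ should be $(\bB_m-\bB)\GRAD(\varphi_m\fraku)$.)
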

\begin{proof}
The estimate \eqref{eq:first_weighted_bound} follows from a localization argument by freezing the coefficients. For this purpose, we set $\Lambda = \| \bB \|_{\calbC(\bar\Omega)}$ and let $\vare >0$ be sufficiently small so that
\begin{equation}
\label{eq:ChoiceOfEps}
  4^{p-1} \mathcal{N}^{p+2} C_\Delta^p \Lambda^{\frac{p}{2}} \alpha^{-p-\frac{dp}{2}} (C_{\varphi,1}C_{p'}+1)^p \varepsilon^p< \frac12,
\end{equation}
where $\mathcal{N}$ and $C_{\varphi,1}$ come from \eqref{eq:scaling_and_Besicovitch} and $C_\Delta$ is as in Proposition~\ref{prop:weightedLaplace}.
Since $\bB \in \calbC_\sym(\bar\Omega)$, it is uniformly continuous in $\bar\Omega$. For every $\vare >0$ there is therefore $\delta>0$ such that for any $x_0 \in \bar\Omega$ and all $x \in \bar\Omega\cap B_\delta(x_0)$
\begin{equation}
\label{eq:OscOnBest}
  |\bB(x) - \bB(x_0) | \leq \vare.
\end{equation}
By compactness of $\bar{\Omega}$, we deduce the existence of $M \in \polN$ and $\{x_m\}_{m=1}^M \subset \bar\Omega$ such that
\[
  \bar\Omega \subset \bigcup_{m=1}^M \mathring{B}_\delta(x_m).
\]
For $m \in \{ 1,\ldots,M\}$, we define $\bar\Omega_m := \bar\Omega \cap B_\delta(x_m)$ and $\bB_m = \bB(x_m) \in \R^{d \times d}_\sym$. Notice that, for every $m \in \{ 1,\ldots,M\}$, $\Omega_m$ is a convex domain.

Let $\{\varphi_m\}_{m=1}^M \subset C_0^\infty(\R^d)$ be a partition of unity subordinate to $\{\mathring{B}_\delta(x_m)\}_{m=1}^M$. Without loss of generality, we may assume that $\{\varphi_m\}_{m=1}^M$ satisfies \eqref{eq:scaling_and_Besicovitch}.

Define $\fraku_m = \fraku \varphi_m \in W^{1,p}_0(\omega,\Omega_m)$, and let $v \in C_0^\infty(\Omega_m)$. Note that,
\begin{align*}
  \int_{\Omega_m} \bB_m \GRAD \fraku_m \cdot \GRAD v \diff x &= \int_{\Omega_m} \bB_m \GRAD \fraku \cdot \GRAD(\varphi_m v) \diff x + \int_{\Omega_m} \fraku \bB_m \GRAD v \cdot \GRAD \varphi_m \diff x \\ &- \int_{\Omega_m} v \bB_m \GRAD \fraku \cdot \GRAD \varphi_m \diff x.
\end{align*}
If we integrate by parts the last term on the right hand side of the previous relation, we obtain
\begin{align*}
  - \int_{\Omega_m} v \bB_m \GRAD \fraku \cdot \GRAD \varphi_m \diff x &= \int_{\Omega_m} \fraku \DIV( v \bB_m \GRAD \varphi_m) \diff x \\
   &= \int_{\Omega_m} \fraku \left[ \bB_m \GRAD \varphi_m \cdot \GRAD v + v \bB_m : D^2 \varphi_m \right] \diff x.
\end{align*}
To obtain the last equality, we have used that $\bB_m$ is a constant matrix. Thus,
\begin{align*}
  \int_{\Omega_m} \bB_m \GRAD \fraku_m \cdot \GRAD v \diff x &= \int_{\Omega_m} \bB_m \GRAD \fraku \cdot \GRAD(\varphi_m v) \diff x + 2\int_{\Omega_m} \fraku \bB_m \GRAD v \cdot \GRAD \varphi_m \diff x \\ &+ \int_{\Omega_m} \fraku v \bB_m : D^2 \varphi_m \diff x \\
  & =\int_{\Omega} (\bB_m - \bB) \GRAD \fraku \cdot \GRAD(\varphi_m v) \diff x  + \int_\Omega \bB \GRAD \fraku \cdot \GRAD(\varphi_m v)  \diff x \\
  &+ \int_{\Omega_m} \fraku \left[2\bB_m \GRAD v \cdot \GRAD \varphi_m + v \bB_m : D^2 \varphi_m \right] \diff x.
\end{align*}
In other words, $\fraku_m \in W^{1,p}_0(\omega,\Omega_m)$ satisfies
\begin{equation}
\label{eq:LocProblemConstCoeff}
  \int_{\Omega_m} \bB_m \GRAD \fraku_m \cdot \GRAD v \diff x = \langle \calF_m, v \rangle \qquad \forall v \in C_0^\infty(\Omega_m),
\end{equation}
where the distribution $\calF_m$ is defined as 
\begin{multline}\label{Eqn:forCallF}
  \langle \calF_m, v \rangle = \int_\Omega \bmu \cdot \GRAD(\varphi_m v) \diff x + \int_\Omega \gamma \varphi_m v \diff x + \int_{\Omega} (\bB_m - \bB) \GRAD \fraku \cdot \GRAD(\varphi_m v) \diff x \\
  + \int_{\Omega_m} \fraku \left[2\bB_m \GRAD v \cdot \GRAD \varphi_m + v \bB_m : D^2 \varphi_m \right] \diff x.
\end{multline}
It is not difficult to see that the distribution  $\calF_m$ belongs to $W^{-1,p}(\omega,\Omega_m)$. 
 
We now note that since the matrix $\bB_m$ is symmetric and positive definite, up to an affine transformation the operator in \eqref{eq:LocProblemConstCoeff} is the Laplacian. In fact, $\bB_m = \bQ_m^\intercal \bLambda_m \bQ_m$ with $\bQ_m$ orthogonal and $\bLambda_m$ diagonal. Define the (linear) transformation
\[
  y = \bF_m (x) = \bLambda_m^{{-1/2}} \bQ_m x,
  \quad
  {x \in \Omega_m},
  \quad
  D\bF_m(x) = \bLambda_m^{{-1/2}} \bQ_m.
\]
With the change of variables $\hat{w}(y) = w(x),$  it follows that
$ \GRAD_x w(x) = D\bF_m^\intercal \GRAD_y \hat{w}{(y)}$. Furthermore, $w\in C_0^{\infty}({\Omega_m})$ if and only if $\hat{w} \in C_0^{\infty}(D_m)$, where ${D_m = \bF_m(\Omega_m)}$. It follows that for each $v\in C_0^{\infty}(\Omega)$,
\begin{align*}
  \int_{\Omega_m} \bB_m \GRAD \fraku_m \cdot \GRAD v \diff x
  &
  = \int_{D_m} \GRAD_y \hat{v}^\intercal D\bF_m \bB_m D\bF_m^\intercal \GRAD_y \hat{\fraku}_m |\det D\bF_m|^{-1} \diff y
  \\
  &
  = \left| \det \bLambda_m^{{-\frac{1}{2}}} \right|^{-1} \int_{D_m} \GRAD_y \hat{\fraku}_m \cdot \GRAD_y \hat{v} \diff y.
\end{align*}
Applying a similar change of variables, we consider $\hat{\calF}_m\in W^{-1,p}(\hat{\omega}_{m},D_m)$ {to be such that
$
  \langle \calF_m, v\rangle = \langle\hat{\calF}_m,\hat{v}\rangle
$
for all $v \in C_0^{\infty}(\Omega_m)$,} where we have set $\hat\omega_m = \omega \circ \bF^{-1}_m$. We now note that
\begin{align*}
  \| \calF_m \|_{W^{-1,p}(\omega,\Omega_m)}
  &
  \leq
  \left |\det \bLambda_m^{{-\frac{1}{2}}} \right |^{-\frac{1}{p}}
  \frac{ \max_{i=1 \ldots d} \bLambda_{m,ii}^{{{-\frac{1}{2}}}} }{  \min_{i=1 \ldots d} \bLambda_{m,ii}^{{-\frac{1}{2}}} } \| \hat{\calF}_m \|_{W^{-1,p}(\hat{\omega}_m,D_m)}, \\
  \| \hat{\calF}_m \|_{W^{-1,p}(\hat\omega_m,D_m)}
  &
  \leq
  \left| \det \bLambda_m^{\frac{1}{2}} \right|^{-\frac{1}{p}} \frac{ \max_{i=1 \ldots d} \bLambda_{m,ii}^{\frac{1}{2}} }{  \min_{i=1 \ldots d} \bLambda_{m,ii}^{\frac{1}{2}}  } \| \calF_m \|_{W^{-1,p}(\omega,\Omega_m)}.
\end{align*}
Combining all of the above calculations, we obtain that $\hat{\fraku}_m$, which now belongs to $W^{1, p}_{0}(\hat{\omega}_m, D_m)$ solves the equation 
\[
-\Delta \hat{\fraku}_m = \left| \det \bLambda_m^{-\frac{1}{2}} \right| \hat{\calF}_m \text{ in } \mathscr{D}'(D_m).
\]

Now, since $D_m$ is convex, by Proposition~\ref{prop:weightedLaplace} we have
\begin{align*}
  \| \GRAD \hat{\fraku}_m \|_{\bL^p(\hat{\omega}_m,D_m)}
  &
  \leq  C_\Delta |\det \bLambda_m^{{-\frac{1}{2}}} | \| \hat{\calF}_m \|_{W^{-1,p}(\hat{\omega}_m,D_m)}\\
  & \leq C_\Delta \left|\det \bLambda_m^{\frac{1}{2}} \right|^{-1-\frac{1}{p}} \frac{ \max_{i=1 \ldots d} \bLambda_{m,ii}^{\frac{1}{2}} }{  \min_{i=1 \ldots d}  \bLambda_{m,ii}^{\frac{1}{2}} } \| \calF_m \|_{W^{-1,p}(\omega,\Omega_m)}.
\end{align*}
A further change of variables then shows that
\begin{align*}
  \int_{D_m} |\GRAD_y \hat{\fraku}_m |^p \hat{\omega}_m \diff y &= \int_{\Omega_m} |D\bF_m^{-\intercal} \GRAD_x \fraku_m|^p  |\det D\bF_m| \omega\diff x \\
    &
    =  \left|\det \bLambda_m^{-\frac{1}{2}} \right| \int_{\Omega_m} |\bLambda_m^{\frac{1}{2}} \bQ_m \GRAD_x \fraku_m|^p \omega \diff x \\
    &
    \geq {\left( \min_{i=1 \ldots d} \bLambda_{m,ii}^{\frac{1}{2}}  \right)^p |\det \bLambda_m^{-\frac{1}{2}}} | \| \GRAD \fraku_m \|_{\bL^p(\omega,\Omega_m)}^p.
\end{align*}
Finally, we use the spectral bounds for $\bB_m$ to conclude that
\begin{align*}
  \| \GRAD \fraku_m \|_{\bL^p(\omega,\Omega_m)}
  &\leq
  {C_\Delta  |\det \bLambda_m^{\frac{1}{2}} |^{-1} \frac{ \max_{i=1 \ldots d} \bLambda_{m,ii}^{\frac{1}{2}} }{  \min_{i=1 \ldots d} \bLambda_{m,ii}  }} \| \calF_m \|_{W^{-1,p}(\omega,\Omega_m)} \\
  &\leq {C_\Delta \Lambda^{\frac{1}{2}} \alpha^{-1 -\frac{d}{2} }} \| \calF_m \|_{W^{-1,p}(\omega,\Omega_m)}.
\end{align*}

Let us now estimate each of the terms that comprise $\calF_m$. Based on the scaling properties of $\varphi_m$ in \eqref{eq:scaling_and_Besicovitch} and the weighted Poincar\'e inequality \eqref{eq:wPoincare}, we conclude that
\begin{equation}
\begin{aligned}
 \left| \int_\Omega \bmu \cdot \GRAD(\varphi_m v) \diff x \right|
 & \lesssim \|\bmu \|_{\bL^p(\omega,\Omega_m)} \left[ \| \GRAD v \|_{\bL^{p'}(\omega',\Omega_m)} + \delta^{-1} \| v \|_{L^{p'}(\omega',\Omega_m)} \right] \\
 & \lesssim \|\bmu \|_{\bL^p(\omega,\Omega_m)} \| \GRAD v \|_{\bL^{p'}(\omega',\Omega_m)},
 \label{estimate_mu}
\end{aligned}
\end{equation}
and 
\[
  \left| \int_\Omega \gamma \varphi_m v \diff x \right| \leq \| \gamma \|_{L^p(\omega,\Omega_m)} \| v \|_{L^{p'}(\omega',\Omega_m)} \lesssim \delta \| \gamma \|_{L^p(\omega,\Omega_m)} \| \GRAD v \|_{\bL^{p'}(\omega',\Omega_m)}.
\]
Similarly,
\[
  \left| 2\int_{\Omega_m} \fraku \bB_m \GRAD v \cdot \GRAD \varphi_m  \diff x \right| \lesssim \frac\Lambda\delta \| \fraku \|_{L^p(\omega,\Omega_m)} \| \GRAD v \|_{\bL^{p'}(\omega',\Omega_m)},
\]
and
\begin{align*}
  \left| \int_{\Omega_m} \fraku v \bB_m : D^2 \varphi_m \diff x \right| &\lesssim \frac\Lambda{\delta^2} \| \fraku \|_{L^p(\omega,\Omega_m)} \| v \|_{L^{p'}(\omega',\Omega_m)} \\ &\lesssim \frac\Lambda \delta\| \fraku \|_{L^p(\omega,\Omega_m)} \| \GRAD v \|_{\bL^{p'}(\omega',\Omega_m)},
\end{align*}
where we have again used \eqref{eq:wPoincare} to obtain the last inequality. By the choice of $\delta$, we can therefore estimate
\[
 \left| \int_{\Omega_m} (\bB_m - \bB) \GRAD \fraku \cdot \GRAD (\varphi_m v) \diff x\right|  \leq {(C_{\varphi,1}C_{p'} + 1)} \vare \| \GRAD \fraku \|_{\bL^p(\omega,\Omega_m)} \| \GRAD v \|_{\bL^{p'}(\omega',\Omega_m)},
\]
where $C_{\varphi,1}>0$ is defined in \eqref{eq:scaling_and_Besicovitch} and is independent of $\vare$. Here, we have used the arguments leading to \eqref{estimate_mu} to control $\| \GRAD ( \varphi_m v) \|_{\bL^{p'}(\omega',\Omega_m)}$. To summarize, we have then obtained that
\begin{equation}
\label{eq:bound_for_frau_m}
 \begin{aligned}
  \| \GRAD \fraku_m \|_{\bL^p(\omega,\Omega_m)} &\leq
  { C_\Delta \Lambda^{\frac{1}{2}} \alpha^{-1-\frac{d}{2}} }
    \left[
    C_1
    \left(
    \| \bmu \|_{\bL^p(\omega,\Omega_m)}
    + \delta\| \gamma \|_{L^p(\omega,\Omega_m)}
    \right)
    \right.
    \\
    &+
    \left.
    C_2 \Lambda \delta^{-1} \| \fraku \|_{L^p(\omega,\Omega_m)}
    + {(C_{\varphi,1}C_{p'} + 1)} \vare \| \GRAD \fraku \|_{\bL^p(\omega,\Omega_m)}
    \right].
\end{aligned}
\end{equation}

Having obtained the previous bound we derive an estimate for $\| \GRAD \fraku \|_{\bL^p(\omega,\Omega)}$. Using the finite overlapping property of our covering, it follows that
\[
 \| \GRAD \fraku \|^p_{\bL^p(\omega,\Omega)}
 =
  \int_{\Omega} \omega | \nabla \fraku |^p \mathrm{d}x
 \leq \mathcal{N}  \sum_{m=1}^{M} \int_{\Omega_m} \omega | \nabla \fraku|^p \mathrm{d}x.
\]
We now use the partition of unity property to write $\fraku = \sum_{m}\fraku \varphi_m$ and then use the finite overlapping property again to obtain
\begin{equation}
 \| \GRAD \fraku \|^p_{\bL^p(\omega,\Omega)}
 \leq
 \mathcal{N}  \sum_{m=1}^{M} \int_{\Omega_m} \omega \left| \sum_{i=1}^M \nabla \fraku_i \right|^p \mathrm{d}x
 \leq
 \mathcal{N}^{p+1} \sum_{m=1}^{M} \int_{\Omega_m} \omega | \nabla \fraku_m|^p \mathrm{d}x.
 \label{eq:bound_for_fraku_first}
\end{equation}
To obtain the last bound we have used the following easy consequence of H\"older's inequality:
\[
  \left|\sum_{i=1}^{\mathcal{Z}} z_i \right|^p \leq \mathcal{Z}^{\frac{p}{p'}} \left(\sum_{i=1}^{\mathcal{Z}} |z_i|^p \right), \quad \forall \mathcal{Z} \in \polN, \quad \forall \{z_i\}_{i=1}^{\mathcal{Z}} \subset \R.
\]
This inequality will be used once more below.
We now replace the bound obtained for $\fraku_m$ in \eqref{eq:bound_for_frau_m} into the previously derived bound \eqref{eq:bound_for_fraku_first} to deduce
\begin{multline}
  \| \GRAD \fraku \|^p_{\bL^p(\omega,\Omega)}
  \leq
  4^{p-1}
  \mathcal{N}^{p+2}
  { C_\Delta^p \Lambda^{\frac{p}{2}} \alpha^{-p-\frac{dp}{2}} }
  \left[
    C_1^p\| \bmu \|^p_{\bL^p(\omega,\Omega)} + C_1^p \delta^p \| \gamma \|^p_{L^p(\omega,\Omega)}
  \right. \\
  +
  \left.
    C_2^p \Lambda^{p} \delta^{-p} \| \fraku \|_{L^p(\omega,\Omega)}^p
    + {(C_{\varphi,1}C_{p'} + 1)^p} \varepsilon^p \| \GRAD \fraku \|^p_{\bL^p(\omega,\Omega)}
  \right].
\label{eq:bound_ultimate_fraku}
\end{multline}
It is at this point that the initially chosen value of $\varepsilon$ becomes relevant; see \eqref{eq:ChoiceOfEps}.
The last term on the right hand side of the previous inequality can be absorbed on the left hand side, and this completes the proof.
\end{proof}

The following a priori estimate follows from this result.

\begin{corollary}[a priori estimate]
Under the conditions of Lemma~\ref{lem:Garding} and assuming that $\bmu$ and $\gamma$ are smooth, we have
\begin{equation}
\label{eq:aprioriForSmooth}
  \| \GRAD \fraku \|_{\bL^p(\omega,\Omega)} \lesssim \| \bmu \|_{\bL^p(\omega,\Omega)} + \| \gamma \|_{L^p(\omega,\Omega)}.
\end{equation}
The hidden constant depends on the weight $\omega$ only through $[\omega]_{A_p}$ and on the coefficient $\bB$ only through $\alpha$ and the norm $\|\bB \|_{\calbC(\bar\Omega)}$.
\label{co:aprioriForSmooth}
\end{corollary}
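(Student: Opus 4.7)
The estimate \eqref{eq:aprioriForSmooth} improves \eqref{eq:first_weighted_bound} by removing the zeroth--order term $\|\fraku\|_{L^p(\omega,\Omega)}$ from the right--hand side. I would prove it via a compactness--contradiction argument that leverages uniqueness of the homogeneous problem to upgrade the G\aa{}rding--like bound of Lemma~\ref{lem:Garding} into a genuine a priori estimate.

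Assume toward contradiction that \eqref{eq:aprioriForSmooth} fails. After normalization this produces sequences of smooth data $\{\bmu_n\}$, $\{\gamma_n\}$ and solutions $\{\fraku_n\} \subset W^{1,p}_0(\omega,\Omega)$ with $\|\GRAD \fraku_n\|_{\bL^p(\omega,\Omega)} = 1$ while $\|\bmu_n\|_{\bL^p(\omega,\Omega)} + \|\gamma_n\|_{L^p(\omega,\Omega)} \to 0$. The weighted Poincar\'e inequality \eqref{eq:wPoincare} controls $\|\fraku_n\|_{L^p(\omega,\Omega)}$, so $\{\fraku_n\}$ is bounded in $W^{1,p}_0(\omega,\Omega)$. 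Using the weighted Rellich--Kondrachov compactness $W^{1,p}_0(\omega,\Omega) \hookrightarrow L^p(\omega,\Omega)$ available for $\omega \in A_p$, I extract a subsequence with $\fraku_n \rightharpoonup \fraku_\star$ weakly in $W^{1,p}_0(\omega,\Omega)$ and strongly in $L^p(\omega,\Omega)$. Passing to the limit in the weak formulation tested against $\psi \in C_0^\infty(\Omega)$ yields $-\DIV(\bB \GRAD \fraku_\star) = 0$ in $\mathscr{D}'(\Omega)$.

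The crux---and main obstacle---is concluding that $\fraku_\star \equiv 0$, i.e., proving uniqueness of very weak solutions of a linear elliptic equation with smooth symmetric coefficient on the convex polytope $\Omega$. I would proceed by duality: since $\bB \in \calbC_\sym^\infty(\bar\Omega)$ and $\Omega$ is a convex polytope, for each $\psi \in C_0^\infty(\Omega)$ the adjoint problem $-\DIV(\bB \GRAD v) = \psi$ with $v|_{\partial\Omega} = 0$ admits a solution $v \in H^2(\Omega)\cap H^1_0(\Omega)$ by the same convex--domain regularity underpinning Proposition~\ref{prop:weightedLaplace}. The reverse H\"older inequality \eqref{eq:reverse_Holder_inequality} gives $\fraku_\star \in W^{1,1+\vare}_0(\Omega)$ for some $\vare > 0$, and combined with the Sobolev embedding $H^2 \hookrightarrow W^{1,2^*}$ valid in $d \in \{2,3\}$, a density argument permits pairing $\fraku_\star$ against $v$ via integration by parts. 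This produces $\int_\Omega \fraku_\star \psi \,\diff x = 0$ for every $\psi \in C_0^\infty(\Omega)$ and therefore $\fraku_\star = 0$. The matching of dual integrabilities is immediate in $d = 2$; in $d = 3$ one may need either a Meyers--type self--improvement of $\fraku_\star$ up to $H^1_0(\Omega)$---whereupon standard Lax--Milgram uniqueness closes the step---or additional corner regularity of $v$ on convex polyhedra.

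Finally, the strong convergence $\fraku_n \to 0$ in $L^p(\omega,\Omega)$ fed back into \eqref{eq:first_weighted_bound} forces $\|\GRAD \fraku_n\|_{\bL^p(\omega,\Omega)} \to 0$, contradicting the normalization and completing the argument.
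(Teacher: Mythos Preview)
Your compactness--contradiction argument is a genuinely different route from the paper's, and the basic mechanism (upgrade a G\aa{}rding inequality via uniqueness of the homogeneous problem) is sound in principle. The paper instead proceeds constructively: it represents the solution of the adjoint problem $\calL^* w = \Psi$ by the Green's function, uses pointwise bounds on $\calG$ valid on convex domains to reduce matters to a standard singular integral operator, and then invokes the weighted $L^{p'}(\omega')$ boundedness of such operators to obtain $\|w\|_{L^{p'}(\omega',\Omega)} \lesssim \|\Psi\|_{L^{p'}(\omega',\Omega)}$. Duality then yields $\|\fraku\|_{L^p(\omega,\Omega)} \lesssim \|\Phi\|_{W^{-1,p}(\omega,\Omega)}$, which combined with Lemma~\ref{lem:Garding} gives \eqref{eq:aprioriForSmooth}.

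There is, however, a real gap in your proposal: a contradiction argument carried out for a \emph{fixed} weight $\omega$ produces a constant that depends on $\omega$ itself, not merely on $[\omega]_{A_p}$. The statement explicitly requires the latter, and this is not cosmetic---in the proof of Theorem~\ref{thm:DieningIsWellPosed} the estimate is applied to a \emph{family} of weights $\omega_j$ whose $A_2$ characteristics are uniformly bounded but which vary with $j$ (and with the solution $u$). Your argument, as written, gives no control uniform in $j$. To repair this within the contradiction framework you would need to let $\omega_n$ vary as well, subject only to $[\omega_n]_{A_p} \leq C$, and then extract a limiting weight in some sense; no usable compactness of this type is available. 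The paper's constructive approach avoids this entirely because both the Green's function bounds (depending only on $\alpha$, $\|\bB\|_{\calbC(\bar\Omega)}$) and the weighted singular integral bounds (depending only on $[\omega]_{A_p}$) track constants explicitly.

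A secondary issue is the uniqueness step you flag in $d=3$: the duality pairing you sketch requires $\GRAD v \in L^{(1+\vare)'}$ with $(1+\vare)'$ arbitrarily large, which $H^2$ regularity does not deliver. This can be patched---the paper itself later invokes a uniqueness theorem for $W^{1,1+\vare}_0$ solutions with continuous coefficients---but it underscores that the contradiction route is not as clean here as the direct Green's function argument.
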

\begin{proof}
To simplify the notation, we let $\calL \phi = -\DIV(\bB \GRAD \phi)$. For $\Psi \in L^{p'}(\omega',\Omega)$ and smooth, we consider the following auxiliary problem:
\begin{equation}
w \in W^{1,p'}_0(\omega',\Omega):
\quad
\calL^* w = \Psi \textrm{ in } \Omega,
\quad
w = 0 \textrm{ on } \partial \Omega,
\label{eq:problem_for_Psi}
\end{equation}
where $\calL^*$ denotes the formal adjoint of $\calL$. Since $\bB$ and $\Psi$ are assumed to be smooth and $\Omega$ convex, there is a unique solution $w$ to problem \eqref{eq:problem_for_Psi} that belongs to $W^{1,2}_0(\Omega)\cap W^{2,2}(\Omega)$; see \cite[Theorem 3.2.1.2]{MR3396210}. On the other hand, it follows from \cite[Theorem 1]{MR1198129} that $w \in W^{1,q}_0(\Omega)$ for every $q \in (1,\infty)$. Due to the reverse H\"older inequality, it is therefore possible to find a $q$ that is sufficiently large so that $w \in W^{1,q}_0(\Omega) \subset W^{1,p'}_0(\omega',\Omega)$.

We now \emph{claim} that the following regularity estimate is valid:
\begin{equation}
\label{eq:LqAuxEstimate}
  \| w \|_{L^{p'}(\omega',\Omega)} \lesssim \| \Psi \|_{L^{p'}(\omega',\Omega)}.
\end{equation}
With the estimate \eqref{eq:LqAuxEstimate} at hand, we invoke  the G\aa{}rding--like inequality \eqref{eq:first_weighted_bound} of Lemma~\ref{lem:Garding} with $\bmu = \boldsymbol0$ and $\gamma = \Psi$ to obtain
\begin{equation}\label{eq:LqAuxEstimate-consequence}
  \| \GRAD w \|_{\bL^{p'}(\omega',\Omega)} \lesssim \| \Psi \|_{L^{p'}(\omega',\Omega)} + \| w \|_{L^{p'}(\omega',\Omega)} \lesssim \| \Psi \|_{L^{p'}(\omega',\Omega)},
\end{equation}
where we used \eqref{eq:LqAuxEstimate} in the last step.

Let now $\Phi \in W^{-1,p}(\omega,\Omega)$ be smooth and given. Consider $v$ as the solution to $\calL v = \Phi$ in $\Omega$ together with the Dirichlet boundary conditions $v = 0$ on $\partial \Omega$. In view of \cite[Theorem 3.2.1.2]{MR3396210} and \cite[Theorem 1]{MR1198129}, we have that $v \in W^{1,2}_0(\Omega) \cap W^{2,2}(\Omega) \cap W^{1,q}_0(\Omega)$ for every $q \in (1,\infty)$. Again, we can choose $q$ sufficiently large so that $v \in W^{1,q}_0(\Omega) \subset W^{1,p}(\omega,\Omega)$. We now compute
\begin{align*}
\| v \|_{L^p(\omega,\Omega)}
    & = \sup_{\Psi \in C_0^\infty(\Omega)} \frac{\int_\Omega v \Psi \diff x}{\| \Psi \|_{L^{p'}(\omega',\Omega)}}
    =
    \sup_{\Psi \in C_0^\infty(\Omega)} \frac{\int_\Omega v \calL^* w \diff x}{\| \Psi \|_{L^{p'}(\omega',\Omega)}} \\
    &=
    \sup_{\Psi \in C_0^\infty(\Omega)} \frac{\langle \Phi , w \rangle \diff x}{\| \Psi \|_{L^{p'}(\omega',\Omega)}}
    \leq
    \|\Phi \|_{W^{-1,p}(\omega,\Omega)} \sup_{\Psi \in C_0^\infty(\Omega)} \frac{\| \GRAD w \|_{\bL^{p'}(\omega',\Omega)}}{\| \Psi \|_{L^{p'}(\omega',\Omega)}}\\
    &
    \lesssim \|\Phi \|_{W^{-1,p}(\omega,\Omega)},
\end{align*}
where we used \eqref{eq:LqAuxEstimate-consequence} in the last step. To summarize, if \eqref{eq:LqAuxEstimate} holds and $v \in W^{1,p}_0(\omega,\Omega)$ solves  $\calL v = \Phi$ in $\Omega$ and $v = 0$ on $\partial \Omega$, then
\[
  \| v \|_{L^p(\omega,\Omega)} \lesssim \|\Phi \|_{W^{-1,p}(\omega,\Omega)}.
\]
Since $\bmu$ and $\gamma$ are smooth by assumption, $\Phi = -\DIV \bmu + \gamma$ is also smooth and belongs to $W^{-1,p}(\omega,\Omega)$. We now apply the above estimate for $v = \fraku$ and $\Phi = -\DIV \bmu + \gamma$ and combine it with Lemma~\ref{lem:Garding} to conclude \eqref{eq:aprioriForSmooth}.

It therefore remains to prove \eqref{eq:LqAuxEstimate}. To do so, we can use, for instance, the representation of $w$ via the Green's function $\calG$ of $\calL^*$. In fact, we have
\[
  w(x) = \int_\Omega \calG(x,y) \Psi(y) \diff y.
\]
Since $\Omega$ is convex, the following bounds follow from \cite[Theorem 4.1, estimate (4.4)]{MR3169756} and \cite[Theorem 3.3, item (i)]{MR657523}: For $x,y \in \Omega$, we have that
\[
  |\calG(x,y)| \leq K \begin{dcases}
                    1+ |\log(|x-y|)|, & d=2, \\
                    |x-y|^{2-d}, & d = 3.
                  \end{dcases}
\]
In these estimates, the constant $K$ depends on $\bB$ only through $\alpha$ and $\|\bB \|_{\calbC(\bar\Omega)}$. The estimate \eqref{eq:LqAuxEstimate} then follows quickly from the continuity in $L^{r}(\omega,\Omega)$, with $r \in (1,\infty)$ and $\omega \in A_r$, of integral operators with standard kernels, i.e., kernels that satisfy the estimates (5.12)--(5.14) in \cite[page 99]{MR1800316}. Since $\Omega$ is bounded, it follows that for $d=2$
\begin{align*}
  |w(x)| &\leq K \int_\Omega \left( 1+ |\log(|x-y|)|\right) |x-y|^2 \frac{|\Psi(y)|}{|x-y|^2} \diff y   \\
  &\leq K \diam(\Omega)^2 \left( 1 +  |\log(\diam(\Omega))|\right) \int_\Omega \frac{|\Psi(y)|}{|x-y|^2} \diff y,
\end{align*}
where we used that $\mathbb{R}^{+} \ni t \mapsto t^2 |\log(t)| \in \mathbb{R}$ is continous and $t^2 |\log(t)| \rightarrow 0$ as $t \downarrow 0$. For $d = 3$, we have that
\[
  |w(x)| \leq K \diam(\Omega)^2 \int_\Omega \frac{|\Psi(y)|}{|x-y|^3} \diff y.
\]
Define $\mathfrak{K}: \mathbb{R}^d \times \mathbb{R}^d \setminus \{(x,x) : x \in \mathbb{R}^d \} \rightarrow \mathbb{R}$ as $\mathfrak{K}(x,y) = |x-y|^{-d}$. It follows that $\mathfrak{K}$ is a standard kernel in the sense that it satisfies the estimates (5.12)--(5.14) in \cite[page 99]{MR1800316}. We are thus in a position to apply the continuity of singular integral operators on Muckenhoupt weighted spaces \cite[Theorem 7.11, page 144]{MR1800316}, which guarantees the bound \eqref{eq:LqAuxEstimate}: $\| w \|_{L^{p'}(\omega',\Omega)} \lesssim \| \Psi \|_{L^{p'}(\omega',\Omega)}$. This concludes the proof.
\end{proof}

We are now ready to present a proof for the main result of this section.

\begin{proof}[Proof of Theorem~\ref{thm:LinearWellPosed}]
We begin with the proof of existence by means of an approximation argument. Since $\Omega$ is bounded and convex and $\bA \in \calbC_\sym(\bar\Omega)$, there is a sequence $\{\bA_k \}_{k=1}^\infty \subset \calbC_\sym^\infty(\bar\Omega)$ such that $\bA_k \rightrightarrows \bA$ in $\calbC_\sym(\bar\Omega)$. Due to the  uniform convergence, it can be assumed without loss of generality that for each $k$ the matrix valued function $\bA_k$ satisfies the same lower spectral bound as $\bA$, \ie the third condition in \eqref{eq:Aislinear}. On the other hand, since $\omega \in A_p$, we can find $\{\bef_k\}_{k=1}^\infty \subset \bC^\infty(\Omega)$ such that $\bef_k \to \bef$ in $\bL^p(\omega,\Omega)$ and $\{g_k\}_{k=1}^\infty \subset C^\infty(\Omega)$ such that $g_k \to g$ in $L^p(\omega,\Omega)$.

We now consider, for each $k \in \mathbb{N}$, the problem:
\[
  -\DIV (\bA_k\GRAD u_k) = -\DIV \bef_k + g_k \text{ in } \Omega, \qquad u_k = 0 \text{ on } \partial \Omega.
\]
The smoothness of the data is more than enough to confirm the existence and uniqueness of a solution $u_k \in W^{1,2}_0(\Omega) \cap W^{2,2}(\Omega)$ for each $k \in \mathbb{N}$. Moreover, since $\Omega$ is convex, we have that $u_k \in W^{1,q}_0(\Omega)$ for all $q \in (1,\infty)$ \cite[Theorem 1]{MR1198129}. In particular, due to the reverse H\"older inequality, $u_k \in W^{1,p}_0(\omega,\Omega)$. We can therefore use \eqref{eq:aprioriForSmooth} to obtain the following bound for each $k \in \mathbb{N}$:
\[
  \| \GRAD u_k \|_{\bL^p(\omega,\Omega)} \leq C \left( \| \bef_k \|_{\bL^p(\omega,\Omega)} + \| g_k \|_{L^p(\omega,\Omega)} \right),
\]
where $C$ is uniform in $k$ and depends only on $\alpha$ and $\| \bA \|_{\calbC(\bar\Omega)}$. We may then pass to the limit $k \uparrow \infty$ and obtain, up to a subsequence,
\[
 u \in W^{1,p}_0(\omega,\Omega): u_k \rightharpoonup u
 \textrm{ in }
 W^{1,p}_0(\omega,\Omega)
 \textrm{ as } k \uparrow \infty.
\]
%
Linearity asserts that $u$ is a solution to \eqref{eq:Diening}. Since $u$ is a solution and belongs to $W^{1,p}_0(\omega,\Omega)$, we can apply the estimate \eqref{eq:aprioriForSmooth} of Corollary \ref{co:aprioriForSmooth} to obtain \eqref{eq:stabilityLinear}.

It therefore remains to show uniqueness. Due to linearity, it suffices to show that the homogeneous problem has only the trivial solution, \ie $\bef = \boldsymbol0$ and $g = 0$ imply that $u=0$. Let $u \in W^{1,p}_0(\omega,\Omega)$ be a solution of the homogeneous problem. As a consequence of the reverse H\"older inequality, there exists $\vare >0$ such that $u$ belongs to $W^{1,1+\vare}_0(\Omega)$. Since $\bA \in \calbC_\sym(\bar\Omega)$, we can conclude from \cite[Theorem A5.1]{MR2548032} that $u \equiv 0$ in $\Omega$. This argument shows uniqueness of solutions and concludes the proof.
\end{proof}

\section{Quasilinear problems}
\label{sec:Quasilinear}

Having obtained well posedness for linear problems, we prove the existence and uniqueness of solutions to \eqref{eq:Diening}. This extends the results of \cite{MR3531368} to the case of convex polytopal domains.

\begin{theorem}[existence and uniqueness]
\label{thm:DieningIsWellPosed}
Let $d \in \{2,3\}$ and let $\Omega \subset \R^d$ be a bounded and convex polytope. Let us assume that $\ba$ satisfies the structural assumptions \eqref{item:A}--\eqref{item:F}. Let $p \in (1,\infty)$ and let $\omega \in A_p$. Then, for any $\bef \in \bL^p(\omega,\Omega)$ and $g \in L^p(\omega,\Omega)$, there exists a unique solution $u \in W^{1,p}_0(\omega,\Omega)$ to problem \eqref{eq:Diening} in the sense that
\begin{equation}
\label{eq:ToLinearize}
  \int_\Omega \ba(x,\GRAD u) \cdot \GRAD \varphi \diff x = \int_\Omega \bef \cdot \GRAD \varphi \diff x + \int_\Omega g \varphi \diff x \qquad \forall \varphi \in C_0^\infty(\Omega).
\end{equation}
Moreover, the following a priori estimate holds
\begin{equation}
  \| \GRAD u \|_{\bL^p(\omega,\Omega)} \lesssim 1+ \| \bef \|_{\bL^p(\omega,\Omega)} + \| g \|_{L^p(\omega,\Omega)}.
  \label{eq:aprioriWeighted}
\end{equation}
The hidden constant is independent of $\bef$, $g$, and $u$; it depends on $\omega$ only through
$[\omega]_{A_p}$.
\end{theorem}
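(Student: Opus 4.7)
The plan is to follow the strategy outlined in the introduction: linearize the quasilinear equation around the asymptotic matrix $\bA$ of the Uhlenbeck structure and treat the defect as a forcing, so that the linear theory of Theorem~\ref{thm:LinearWellPosed} yields an \emph{a priori} estimate; then build a solution by approximation with smooth data, for which classical monotone operator theory provides smooth approximants; and finally conclude uniqueness from strict monotonicity.

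\textbf{Step 1: the \emph{a priori} estimate.} Assume $u \in W^{1,p}_0(\omega,\Omega)$ satisfies \eqref{eq:ToLinearize}, and define the defect $\bR(x) := \ba(x,\GRAD u(x)) - \bA(x)\GRAD u(x)$, where $\bA \in \calbC_\sym(\bar\Omega)$ is the matrix from assumption \eqref{eq:baisUhlenbeck}. Then $u$ solves the linear problem $-\DIV(\bA\GRAD u) = -\DIV(\bef - \bR) + g$ in $\mathscr{D}'(\Omega)$. Since \eqref{eq:baiscoercive}--\eqref{eq:baisgrowth} transfer to $\bA$ as lower and upper spectral bounds $\alpha$ and $\Lambda$, Theorem~\ref{thm:LinearWellPosed} applies, provided we can estimate $\|\bR\|_{\bL^p(\omega,\Omega)}$. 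Given $\vare>0$, assumption \eqref{eq:baisUhlenbeck} furnishes $N=N(\vare)$ with $|\bR(x)| \leq \vare|\GRAD u|$ whenever $|\GRAD u|\geq N$, and \eqref{eq:baisgrowth} combined with $\bA \in \calbC_\sym(\bar\Omega)$ gives $|\bR(x)| \leq (\Lambda+\|\bA\|_{\calbC(\bar\Omega)})N$ on the complement. Consequently,
\[
  \|\bR\|_{\bL^p(\omega,\Omega)} \leq \vare\|\GRAD u\|_{\bL^p(\omega,\Omega)} + C(N)\,\omega(\Omega)^{1/p}.
\]
Choosing $\vare$ sufficiently small that the hidden constant in \eqref{eq:stabilityLinear} times $\vare$ is less than $1/2$, the first term is absorbed into the left-hand side, yielding \eqref{eq:aprioriWeighted}.

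\textbf{Step 2: existence via approximation.} Pick $\{\bef_k\}\subset\bC_0^\infty(\Omega)$ and $\{g_k\}\subset C_0^\infty(\Omega)$ with $\bef_k\to\bef$ in $\bL^p(\omega,\Omega)$ and $g_k\to g$ in $L^p(\omega,\Omega)$. Assumptions \eqref{item:A}, \eqref{eq:baiscoercive}, \eqref{eq:baisgrowth}, and \eqref{eq:baismonotone} allow a direct application of the Browder--Minty theorem (see \cite[Chapter~2]{MR3014456}) to produce a unique $u_k \in W^{1,2}_0(\Omega)$ solving the equation with data $(\bef_k,g_k)$. Using the strong Uhlenbeck hypothesis \eqref{item:F} to control $\partial_\bv\ba$ uniformly, classical Meyers/Calder\'on--Zygmund theory together with \cite[Theorem~1]{MR1198129} on convex polytopes upgrades the regularity of $u_k$ so that $u_k \in W^{1,q}_0(\Omega)$ for every $q\in(1,\infty)$, and in particular $u_k \in W^{1,p}_0(\omega,\Omega)$ via the reverse H\"older inequality. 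Step~1 applied to $u_k$ provides a bound uniform in $k$:
\[
  \|\GRAD u_k\|_{\bL^p(\omega,\Omega)} \lesssim 1 + \|\bef_k\|_{\bL^p(\omega,\Omega)} + \|g_k\|_{L^p(\omega,\Omega)}.
\]

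\textbf{Step 3: passage to the limit and uniqueness.} By reflexivity, extract a subsequence $u_k \rightharpoonup u$ in $W^{1,p}_0(\omega,\Omega)$. Since $W^{1,p}_0(\omega,\Omega)$ is compactly embedded into $L^{1+\epsilon}(\Omega)$ (reverse H\"older) and hence into $L^1(\Omega)$, a further subsequence converges a.e. To pass to the limit in the nonlinear term I use a Minty--Browder argument: monotonicity yields, for every smooth $\varphi$ with compact support,
\[
  \int_\Omega\bigl(\ba(x,\GRAD u_k)-\ba(x,\GRAD\varphi)\bigr)\cdot(\GRAD u_k-\GRAD\varphi)\,\diff x \geq 0.
\]
Using the equation satisfied by $u_k$ to replace the first factor and letting $k\uparrow\infty$, one obtains the corresponding variational inequality for $u$; the trick of replacing $\varphi$ by $u+t\psi$ and letting $t\to 0^\pm$ recovers \eqref{eq:ToLinearize}. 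For uniqueness, if $u_1,u_2 \in W^{1,p}_0(\omega,\Omega)$ are two solutions, density of smooth functions and weighted H\"older (pairing $\ba(x,\GRAD u_i) \in \bL^p(\omega,\Omega)$ via $|\ba|\leq \Lambda|\GRAD u_i|$ with $\GRAD(u_1-u_2)$) justify testing against $u_1-u_2$; strict monotonicity \eqref{eq:baismonotone} then forces $\GRAD u_1=\GRAD u_2$ almost everywhere, and the weighted Poincar\'e inequality \eqref{eq:wPoincare} gives $u_1=u_2$.

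\textbf{Main obstacle.} The most delicate point is the passage to the limit in $\ba(x,\GRAD u_k)$, since only weak convergence of the gradients in the weighted space is available. The asymptotically Uhlenbeck structure is what makes the Minty trick work here, because $\ba$ is linear up to a uniformly bounded remainder, so the test-function manipulations remain within the duality $W^{1,p}_0(\omega,\Omega)$--$W^{-1,p}(\omega,\Omega)$ without requiring strong convergence of $\GRAD u_k$. A secondary technical point is justifying the regularity upgrade $u_k\in W^{1,q}_0(\Omega)$ for all $q$, where the linearity-at-infinity again plays a crucial role by effectively reducing to the linear problem on a convex polytope.
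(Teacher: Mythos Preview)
Your Step~1 is sound as written, but it proves the \emph{a priori} estimate only under the hypothesis $u\in W^{1,p}_0(\omega,\Omega)$, and this is precisely the membership that is not available for the approximants $u_k$ in Step~2. From Browder--Minty you obtain $u_k\in W^{1,2}_0(\Omega)$ and nothing more; the sentence ``Meyers/Calder\'on--Zygmund together with \cite[Theorem~1]{MR1198129} upgrades the regularity of $u_k$ to $W^{1,q}_0(\Omega)$ for every $q$'' does not stand: Meyers gives only a small gain $W^{1,2+\epsilon_0}$, and \cite{MR1198129} is a statement about linear equations with continuous coefficients---when you linearize, the right-hand side carries the defect $\bR_k=\ba(\cdot,\GRAD u_k)-\bA\GRAD u_k$, whose integrability is no better than that of $\GRAD u_k$ itself, so the bootstrap stalls. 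The paper (following \cite{MR3531368}) avoids this circularity altogether: it proves the weighted estimate for \emph{any} $u\in W^{1,q}_0(\Omega)$ with $q>1$, by first reducing via Rubio de Francia extrapolation to $p=2$, and then introducing the truncated weights $\omega_j=\min\{\omega,\,j(1+\calM[\GRAD u])^{q-2}\}\in A_2$, which are designed so that $u\in W^{1,2}_0(\omega_j,\Omega)$ holds automatically (since $q<2$ makes $|\GRAD u|^2(1+\calM[\GRAD u])^{q-2}$ integrable). Theorem~\ref{thm:LinearWellPosed} can then be applied legitimately with weight $\omega_j$, and one lets $j\uparrow\infty$. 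This device is the missing idea in your argument.

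There is a second gap in your uniqueness step. You propose to test against $u_1-u_2$, but both $\ba(x,\GRAD u_i)$ and $\GRAD(u_1-u_2)$ lie in $\bL^p(\omega,\Omega)$, not in the dual pair $\bL^p(\omega,\Omega)\times\bL^{p'}(\omega',\Omega)$, so the integral $\int_\Omega \ba(x,\GRAD u_1)\cdot\GRAD(u_1-u_2)\,\diff x$ need not make sense and the monotonicity inequality cannot be invoked directly. This is exactly the ``very weak'' regime the introduction warns about (pathological solutions exist even for linear equations). The paper defers to \cite{MR3531368}, where uniqueness is obtained via the strongly asymptotically Uhlenbeck condition~\ref{item:F}: one writes $-\DIV(\tilde\bA\GRAD(u_1-u_2))=0$ with $\tilde\bA(x)=\int_0^1\partial_\bv\ba(x,\GRAD u_2+t\GRAD(u_1-u_2))\,\diff t$, and uses the closeness of $\tilde\bA$ to the continuous $\bA$ to reduce to the linear uniqueness theory.
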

\begin{proof}
We follow, with some modifications, the proof of \cite[Theorem 2.3]{MR3531368}.
We begin the proof by showing that the estimate \eqref{eq:aprioriWeighted} holds for an arbitrary $u \in W_{0}^{1,q}(\Omega)$ solving \eqref{eq:ToLinearize} ($q>1$). Without loss of generality we can restrict ourselves to the case $q \in (1,2)$. We now note that due to the \emph{Rubio de Francia
extrapolation} (see \cite{RubioDeFrancia,MR0745140,MR0684631}, and \cite[Theorem 7.8]{MR1800316}), it suffices to prove the desired estimate for the case $p=2$. For this purpose, we define the following weight for $j \in \polN$:
\[
  \omega_j = \min\left\{ \omega, j \left( 1 + \calM[\GRAD u] \right)^{q-2} \right\}.
\]
Since $q \in (1,2)$, it follows from the item \eqref{itemvi} in \S \ref{sec:Notation} that $\calM[\GRAD u]^{q-2} \in A_2$. We now use that $\omega \in A_2$, \cite[bound (3.6)]{MR3531368}, and the fact that the $A_p$ condition is invariant under translations and dilations \cite[Remark 1.2.4, item 5]{MR1774162} to obtain that $\omega_j \in A_2$ and that
\[
 [\omega_j]_{A_2} \leq  [\omega]_{A_2} + [( 1 + \calM[\GRAD u])^{q-2} ]_{A_2} \leq C(u,\omega).
\]
By construction, we have that $u \in W^{1,2}_0(\omega_j,\Omega)$, $g \in L^2(\omega_j,\Omega)$, and $\bef \in \bL^2(\omega_j,\Omega)$.

The reformulation of problem \eqref{eq:ToLinearize} results in the following relation
\[
  \int_\Omega \bA(x) \GRAD u \cdot \GRAD \varphi \diff x = \int_\Omega \bef \cdot\GRAD \varphi \diff x + \int_\Omega g \varphi \diff x + \int_\Omega \left[ \bA(x) \GRAD u - \ba(x,\GRAD u) \right] \cdot \GRAD \varphi \diff x,
\]
for all $\varphi \in C_0^\infty(\Omega)$. Let us introduce $\calF \in W^{-1,2}(\omega_j,\Omega)$ as
\[
  \langle \calF, \varphi \rangle = \int_\Omega \bef \cdot\GRAD \varphi \diff x + \int_\Omega g \varphi \diff x + \int_\Omega \left[ \bA(x) \GRAD u - \ba(x,\GRAD u) \right] \cdot \GRAD \varphi \diff x.
\]
We then use the bound \eqref{eq:stabilityLinear} from Theorem \ref{thm:LinearWellPosed} in order to obtain
\begin{multline}
\int_{\Omega} \omega_j | \GRAD u |^2 \diff x
\lesssim
\int_{\Omega} \omega_j | \bef |^2 \diff x
+
\int_{\Omega} \omega_j | g |^2 \diff x
+
\int_{\Omega} \omega_j | \bA(x) \GRAD u - \ba(x,\GRAD u) |^2 \diff x.
\label{eq:auxiliary_estimate_omegaj_nablau}
\end{multline}
The hidden constant depends on the weight $\omega$ only through ts Muckenhoupt characteristic $[\omega]_{A_p}$ and on the coefficient $\bA$ only through the norm $\|\bA\|_{\calbC(\bar\Omega)}$. This step is the main difference from \cite{MR3531368}, where the authors instead use \cite[Theorem 2.5]{MR3531368} to derive the previous bound. The results of \cite[Theorem 2.5]{MR3531368} hold under the \emph{additional assumption that $\Omega$ is a $C^1$-domain.}
We now let $N>0$ to be chosen and define the sets $S_N = \{ x \in \Omega: |\GRAD u(x)| \leq N \}$ and $B_N = \Omega \setminus S_N$, and observe that
\begin{align*}
  \int_{S_N} \omega_j |\bA(x) \GRAD u - \ba(x,\GRAD u) |^2 \diff x &\leq 2 \int_{S_N} \omega_j \left( |\bA(x)|^2|\GRAD u |^2 + |\ba(x,\GRAD u)|^2 \right) \diff x \\
  &
  \leq
  2N^2 \left( \|\bA\|^2_{\calbC(\bar\Omega)}
+ \Lambda^2 \right) \int_\Omega \omega_j \diff x,
\end{align*}
where we have used \eqref{eq:baisgrowth} and the fact that $\bA \in \calbC_{\sym}(\bar\Omega)$. Let now $\epsilon >0$. We can choose $N=N(\epsilon)>0$ so large that we can claim with \eqref{eq:baisUhlenbeck} that
\begin{align*}
  \int_{B_N} \omega_j |\bA(x) \GRAD u - \ba(x,\GRAD u) |^2 \diff x &= \int_{B_N} \frac{ |\bA(x) \GRAD u - \ba(x,\GRAD u) |^2}{|\GRAD u|^2} \omega_j |\GRAD u|^2 \diff x \\
  &\leq \epsilon \int_\Omega \omega_j|\GRAD u|^2  \diff x.
\end{align*}
If we combine the previous two estimates and insert them into \eqref{eq:auxiliary_estimate_omegaj_nablau}, we get
\[
\int_{\Omega} \omega_j | \GRAD u |^2 \diff x
\lesssim
\int_{\Omega} \omega_j | \bef |^2 \diff x
+
\int_{\Omega} \omega_j | g |^2 \diff x
+
N^2 \int_{\Omega} \omega_{j} dx
+
\epsilon \int_{\Omega} \omega_j | \GRAD u |^2 \diff x.
\]
Note that, once $\epsilon>0$ is chosen, $N$ is fixed independently of $j$ and the hidden constant in the previous bound is independent of $j$. We can then choose $\epsilon$ small so that $C \epsilon \int_{\Omega} \omega_j | \GRAD u |^2 \diff x$ on the right hand side of this estimate can be absorbed into the left hand side and we obtain
\[
\int_{\Omega} \omega_j | \GRAD u |^2 \diff x \lesssim \int_\Omega \omega_j \left( 1 + |\bef|^2 + |g|^2 \right) \diff x.
\]

The rest of the proof follows the arguments in \cite[Theorem 2.3]{MR3531368} without change. This concludes the proof.
\end{proof}

\begin{remark}
The generalization of \cite{MR3531368} to incompressible fluids can be found in \cite{MR3582412}. In this paper, the authors consider a model of a steady incompressible non-Newtonian flow under external forcing and provide the full-range theory, i.e., existence, optimal regularity, and uniqueness of solutions with respect to forcing belonging to Lebesgue spaces and Muckenhoupt weighted Lebesgue spaces. For extensions of some results from \cite{MR3582412} to convex polyhedral domains we refer the reader to \cite{MR4408483}.
\end{remark}

\section{Discretization}
\label{sec:Discretize}

In this section, we propose and analyze a convergent finite element discretization for problem \eqref{eq:Diening}. We begin the discussion by noting that since we assume that $\Omega$ is a convex polytope, it can be meshed exactly. Therefore, we introduce a quasiuniform family $\Tr = \{ \T \}_{h>0}$ of meshes $\T$ of $\bar{\Omega}$, where the parameter $h$ is the mesh size. Next, we define the family of finite element spaces $\{\V\}_{h>0}$ as
\[
  \V = \left\{ w_h \in W^{1,\infty}_0(\Omega) : \ w_{h|T} \in \P \,\, \forall T \in \T \right\},
\]
where, for a simplicial element $T$, $\P$ corresponds to $\mathbb{P}_1$---the space of polynomials of total degree at most one. If $T$ is a $d$--rectangle, then $\P$ stands for $\polQ_1$---the space of polynomials of degree not larger than $1$ in each variable.
We immediately note that, for every $p \in (1,\infty)$ and all $\omega \in A_p$, we have
\[
  \V \subset W^{1,p}_0(\omega,\Omega) \cap W^{1,p'}_0(\omega',\Omega).
\]

\subsection{The Ritz projection}
We define $\Ritz : W^{1,1}_0(\Omega) \to \V$ as
\begin{equation}
\label{eq:DefOfRitz}
  \int_\Omega \GRAD (w-\Ritz w) \cdot \GRAD \varphi_h \diff x = 0 \quad \forall \varphi_h \in \V.
\end{equation}
The Ritz projection $\Ritz$ is the best approximation to a function with respect to the $W^{1,2}_0(\Omega)$--norm. Moreover, $\Ritz$ is stable and has optimal approximation properties in weighted spaces.

\begin{theorem}[stability, approximation, and inf-sup condition]
\label{thm:StabRitz}
Let $\Tr = \{ \T \}_{h>0}$ be a quasiuniform family of triangulations of the convex polytope $\Omega$. For every $p \in (1,\infty)$ and all $\omega \in A_p$ the Ritz projection $\Ritz : W^{1,1}_0(\Omega) \to \V$ defined in \eqref{eq:DefOfRitz} satisfies
 \[
  \| \GRAD \Ritz w \|_{\bL^p(\omega,\Omega)} \leq C_R \| \GRAD w \|_{\bL^p(\omega,\Omega)} \qquad \forall w \in W^{1,p}_0(\omega,\Omega).
\]
where $C_R$ is independent of $w$ and $h$. The mapping $\Ritz$ also has the optimal approximation property
\[
  \| \GRAD (w-\Ritz w) \|_{\bL^p(\omega,\Omega)} \leq (1+C_R) \inf_{\varphi_h \in \V} \| \GRAD ( w - \varphi_h) \|_{\bL^p(\omega,\Omega)} \quad \forall w \in W^{1,p}_0(\omega,\Omega).
\]
Finally, we have the following discrete inf-sup condition:
\[
  \| \GRAD w_h \|_{\bL^p(\omega,\Omega)} \leq C_\Delta C_R \sup_{\varphi_h \in \V} \frac{ \int_\Omega \GRAD w_h \cdot \GRAD \varphi_h \diff x}{\| \GRAD \varphi_h \|_{\bL^{p'}(\omega',\Omega)}} \qquad \forall w_h \in \V.
\]
\end{theorem}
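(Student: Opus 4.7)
The three assertions are tightly coupled, and the core of the proof is the weighted stability of $\Ritz$. Once this estimate is available, the optimal approximation property follows from a one-line computation: since $\Ritz\varphi_h=\varphi_h$ for every $\varphi_h\in\V$, the triangle inequality gives
\[
 \|\GRAD(w-\Ritz w)\|_{\bL^p(\omega,\Omega)}
 \le \|\GRAD(w-\varphi_h)\|_{\bL^p(\omega,\Omega)}
  + \|\GRAD\Ritz(w-\varphi_h)\|_{\bL^p(\omega,\Omega)}
 \le (1+C_R)\|\GRAD(w-\varphi_h)\|_{\bL^p(\omega,\Omega)},
\]
and taking the infimum over $\varphi_h\in\V$ delivers the second claim. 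The inf-sup condition is obtained by duality: for $w_h\in\V$, one writes $\|\GRAD w_h\|_{\bL^p(\omega,\Omega)}$ as the supremum of $\int_\Omega\GRAD w_h\cdot\boldsymbol{\phi}\diff x$ over unit vector fields $\boldsymbol{\phi}\in\bL^{p'}(\omega',\Omega)$; applies Proposition~\ref{prop:weightedLaplace} at the dual exponent to represent $\boldsymbol{\phi}$ through the solution $z\in W^{1,p'}_0(\omega',\Omega)$ of $-\Delta z=-\DIV\boldsymbol{\phi}$, with $\|\GRAD z\|_{\bL^{p'}(\omega',\Omega)}\le C_\Delta$; and invokes the Galerkin orthogonality \eqref{eq:DefOfRitz} to replace $z$ by $\Ritz z\in\V$, which costs the factor $C_R$ coming from the stability of $\Ritz$ at the dual exponent.

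\textbf{Stability via duality plus a Laplace lift.} For the stability itself, the plan is to combine a duality representation with Proposition~\ref{prop:weightedLaplace}. Fix $w\in W^{1,p}_0(\omega,\Omega)$ and a test field $\boldsymbol{\phi}\in\bC_0^\infty(\Omega)$ with $\|\boldsymbol{\phi}\|_{\bL^{p'}(\omega',\Omega)}\le 1$; let $z\in W^{1,p'}_0(\omega',\Omega)$ be the solution of $-\Delta z=-\DIV\boldsymbol{\phi}$ furnished by Proposition~\ref{prop:weightedLaplace}. The weak formulation for $z$ together with two applications of the Galerkin orthogonality built into the definition of $\Ritz$ (first testing the equation for $\Ritz z$ against $\Ritz w\in\V$, and then testing the equation for $\Ritz w$ against $\Ritz z\in\V$) yield
\[
 \int_\Omega\GRAD\Ritz w\cdot\boldsymbol{\phi}\diff x=\int_\Omega\GRAD\Ritz w\cdot\GRAD z\diff x=\int_\Omega\GRAD\Ritz w\cdot\GRAD\Ritz z\diff x=\int_\Omega\GRAD w\cdot\GRAD\Ritz z\diff x,
\]
and H\"older's inequality reduces the entire task to controlling $\|\GRAD\Ritz z\|_{\bL^{p'}(\omega',\Omega)}$ uniformly in $h$ by $\|\GRAD z\|_{\bL^{p'}(\omega',\Omega)}$---that is, to the very same stability estimate at the dual exponent $(p',\omega')$.

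\textbf{Main obstacle and how I would unblock it.} The previous duality argument only shows that the stability constants at $(p,\omega)$ and at $(p',\omega')$ bound each other, so it does not close on its own. To unblock it I would first dispose of the unweighted case: on a convex polytope discretized by a quasiuniform mesh, the $W^{1,q}_0(\Omega)$-stability of $\Ritz$ for every $q\in(1,\infty)$ is classical, going back to Rannacher--Scott in two dimensions and Guzm\'an--Leykekhman--Rossmann--Schatz in three, and rests ultimately on Schatz--Wahlbin local energy estimates together with the pointwise regularity of the Laplace Green's function already exploited in Corollary~\ref{co:aprioriForSmooth}. From this unweighted stability, the weighted stability for every $\omega\in A_p$, with a constant depending on $\omega$ only through $[\omega]_{A_p}$, follows by the Rubio de Francia extrapolation theorem (already used in the proof of Theorem~\ref{thm:DieningIsWellPosed}), applied uniformly in $h$ to the linear operator $\bef\mapsto\GRAD\Ritz u_h^{\bef}$, where $u_h^{\bef}\in\V$ solves $\int_\Omega\GRAD u_h^{\bef}\cdot\GRAD v_h\diff x=\int_\Omega\bef\cdot\GRAD v_h\diff x$ for every $v_h\in\V$. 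Feeding the resulting weighted stability back into the duality display above closes the stability estimate, and via the first paragraph the approximation property and the inf-sup condition follow at once.
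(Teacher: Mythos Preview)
Your treatments of the approximation property and the discrete inf-sup condition are correct and coincide with the paper's: the paper derives the approximation estimate from the projection property in one line, and obtains the inf-sup condition by exactly the duality-plus-Galerkin-orthogonality chain you describe (Proposition~\ref{prop:weightedLaplace} at the dual pair, replace $\varphi$ by $\Ritz\varphi$, use stability of $\Ritz$ in $W^{1,p'}_0(\omega',\Omega)$, enlarge the supremum to $\V$). For the stability estimate the paper does not give an argument at all; it simply cites \cite[Corollary~3.6]{diening2023pointwise}.

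Your attempt to supply a proof of the stability has a genuine gap in the ``unblocking'' step. Rubio de Francia extrapolation takes as \emph{input} a weighted estimate valid for \emph{all} $\omega\in A_{p_0}$ at a single exponent $p_0$ (with constant depending only on $[\omega]_{A_{p_0}}$), and outputs the corresponding weighted estimates for all $p\in(1,\infty)$ and $\omega\in A_p$. It does \emph{not} allow one to pass from unweighted $L^q$-bounds for all $q\in(1,\infty)$ to weighted bounds: the unweighted estimate is the single case $\omega\equiv 1$, which is nowhere near the full $A_{p_0}$-hypothesis. So the classical Rannacher--Scott / Guzm\'an--Leykekhman $W^{1,q}$-stability, by itself, does not feed into extrapolation. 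Your duality display correctly shows that the stability constants at $(p,\omega)$ and $(p',\omega')$ are equivalent, but this symmetry alone cannot be closed by the route you propose. What is actually needed is either a direct weighted argument---typically a pointwise or maximal-function bound on the Ritz error that dominates $\GRAD\Ritz w$ by a quantity controlled by $\calM[\GRAD w]$, from which $A_p$-weighted stability follows immediately---or a weighted estimate at a single exponent for all $A_{p_0}$ weights, after which extrapolation does apply. The reference the paper cites supplies precisely such pointwise estimates.
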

\begin{proof}
The stability is proved in  \cite[Corollary 3.6]{diening2023pointwise}. The best approximation follows directly from the fact that $\Ritz$ is a projection. Finally, the claimed discrete inf-sup condition is obtained as follows: From Proposition~\ref{prop:weightedLaplace}, the definition of $\Ritz$, the stability of $\Ritz$, and a trivial fact it follows that
\begin{align*}
  \| \GRAD w_h \|_{\bL^p(\omega,\Omega)} &\leq C_\Delta\sup_{\varphi \in W^{1,p'}_0(\omega',\Omega)} \frac{ \int_\Omega \GRAD w_h \cdot \GRAD \varphi \diff x}{\| \GRAD \varphi \|_{\bL^{p'}(\omega',\Omega)}} \\
    &= C_\Delta\sup_{\varphi \in W^{1,p'}_0(\omega',\Omega)} \frac{ \int_\Omega \GRAD w_h \cdot \GRAD \Ritz \varphi \diff x}{\| \GRAD \varphi \|_{\bL^{p'}(\omega',\Omega)}} \\
    &\leq C_\Delta C_R \sup_{\varphi \in W^{1,p'}_0(\omega',\Omega)} \frac{ \int_\Omega \GRAD w_h \cdot \GRAD \Ritz \varphi \diff x}{\| \GRAD \Ritz \varphi \|_{\bL^{p'}(\omega',\Omega)}} \\
    &\leq C_\Delta C_R \sup_{\varphi_h \in \V} \frac{ \int_\Omega \GRAD w_h \cdot \GRAD \varphi_h \diff x}{\| \GRAD \varphi_h \|_{\bL^{p'}(\omega',\Omega)}}.
\end{align*}
This concludes the proof.
\end{proof}

\subsection{The linear problem}
\label{sub:LinearForFEM}

Having established from the properties of the Ritz projection that the Poisson problem, i.e., \eqref{eq:Diening} in the framework of \eqref{eq:Aislinear} with $\bA = \bI$, is discretely well posed, let us examine the case of a variable coefficient. While we conjecture that the results we present here remain true in the case of a merely continuous and elliptic $\bA$, we must make the following assumptions:
\begin{equation}
\label{eq:AsmallOsc}
  \bA \in \calbC_{\sym}(\bar\Omega), \qquad \alpha \bI \preceq \bA(x) \preceq \Lambda \bI \ \ \forall x \in \bar\Omega,
  \qquad 2C_\Delta C_R \left( 1- \frac\alpha\Lambda \right)\leq 1.
\end{equation}
With the latter assumption of ``\emph{small oscillation}'', we follow \cite[Proposition 8.6.2]{MR2373954} and obtain the following discrete stability result.

\begin{theorem}[discrete inf-sup condition]
\label{thm:DLinearWellPosed}
Let $p \in (1,\infty)$, and let $\omega \in A_p$. Let us assume that the coefficient $\bA$ satisfies \eqref{eq:AsmallOsc}. Then, we have
\begin{equation}
\label{eq:DInfSupWithA}
  \| \GRAD w_h \|_{\bL^p(\omega,\Omega)} \leq \frac{2C_\Delta C_R}\Lambda \sup_{\varphi_h \in \V } \frac{ \int_\Omega \bA(x) \GRAD w_h \cdot \GRAD \varphi_h \diff x}{\| \GRAD \varphi_h \|_{\bL^{p'}(\omega',\Omega)}} \quad \forall w_h \in \V.
\end{equation}
In particular, if $\bef \in \bL^p(\omega,\Omega)$ and $g \in L^p(\omega,\Omega)$, then there is a unique solution $u_h$ of the following discrete problem: Find $u_h \in \V$ such that
\[
  \int_\Omega \bA(x) \GRAD u_h \cdot \GRAD v_h \diff x= \int_\Omega \bef \cdot \GRAD v_h \diff x + \int_\Omega g v_h \diff x \qquad \forall v_h \in \V.
\]
In addition, the solution $u_h$ satisfies the estimate
\[
  \| \GRAD u_h \|_{\bL^p(\omega,\Omega)} \leq \frac{2C_\Delta C_R}\Lambda \left( \| \bef \|_{\bL^p(\omega,\Omega)} + C_P \| g \|_{L^p(\omega,\Omega)} \right).
\]
$C_P$ is the best constant in $\| w \|_{\bL^{p}(\omega,\Omega)} \leq C_P \| \GRAD w \|_{\bL^{p}(\omega,\Omega)}$ for all $w \in W_0^{1,p}(\omega,\Omega)$.
\end{theorem}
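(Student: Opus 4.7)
The plan is to follow the pattern of \cite[Proposition 8.6.2]{MR2373954}: deduce the inf-sup condition for the variable-coefficient bilinear form $a_\bA(w_h,\varphi_h) = \int_\Omega \bA \GRAD w_h \cdot \GRAD \varphi_h \diff x$ by perturbing from the Laplacian, for which the inf-sup with constant $C_\Delta C_R$ is already available via the last statement of Theorem~\ref{thm:StabRitz}. Existence, uniqueness and the a priori bound for $u_h$ then follow from \eqref{eq:DInfSupWithA} and duality.

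The core step is the following identity, obtained by adding and subtracting $\Lambda \bI$:
\[
 \Lambda \int_\Omega \GRAD w_h \cdot \GRAD \varphi_h \diff x
 = \int_\Omega \bA(x) \GRAD w_h \cdot \GRAD \varphi_h \diff x
 + \int_\Omega (\Lambda \bI - \bA(x)) \GRAD w_h \cdot \GRAD \varphi_h \diff x.
\]
By the spectral bounds in \eqref{eq:AsmallOsc}, the matrix $\Lambda \bI - \bA$ is pointwise symmetric and nonnegative with spectral norm at most $\Lambda-\alpha$. Therefore, splitting the weight as $\omega = \omega^{1/p}\,\omega'^{1/p'}$ (recall $\omega'^{1/p'} = \omega^{-1/p}$) and applying H\"older's inequality yields
\[
 \left| \int_\Omega (\Lambda \bI - \bA) \GRAD w_h \cdot \GRAD \varphi_h \diff x \right|
 \leq (\Lambda - \alpha) \| \GRAD w_h \|_{\bL^p(\omega,\Omega)} \| \GRAD \varphi_h \|_{\bL^{p'}(\omega',\Omega)}.
\]
Plugging the previous identity into the discrete Laplace inf-sup of Theorem~\ref{thm:StabRitz}, dividing by $\|\GRAD\varphi_h\|_{\bL^{p'}(\omega',\Omega)}$, and taking the supremum over $\varphi_h \in \V$, we obtain
\[
 \| \GRAD w_h \|_{\bL^p(\omega,\Omega)}
 \leq \frac{C_\Delta C_R}{\Lambda}
 \sup_{\varphi_h \in \V} \frac{a_\bA(w_h,\varphi_h)}{\| \GRAD \varphi_h \|_{\bL^{p'}(\omega',\Omega)}}
 + C_\Delta C_R \Bigl( 1 - \tfrac{\alpha}{\Lambda} \Bigr) \| \GRAD w_h \|_{\bL^p(\omega,\Omega)}.
\]
The small-oscillation assumption in \eqref{eq:AsmallOsc} guarantees that the coefficient of the second term on the right is at most $1/2$, so that it can be absorbed on the left, yielding \eqref{eq:DInfSupWithA}.

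For the solvability and stability of the discrete problem: since \eqref{eq:DInfSupWithA} shows that the linear map $w_h \mapsto a_\bA(w_h,\cdot)$ on the finite-dimensional space $\V$ is injective, it is bijective, so $u_h$ exists and is unique. To obtain the norm estimate we apply \eqref{eq:DInfSupWithA} to $u_h$, use the discrete equation to replace $a_\bA(u_h,\varphi_h)$ with $\int_\Omega \bef\cdot\GRAD\varphi_h \diff x + \int_\Omega g\varphi_h \diff x$, bound the first integral by H\"older's inequality (again splitting $\omega$ as above) and the second by H\"older followed by the weighted Poincar\'e inequality \eqref{eq:wPoincare}, which provides the constant $C_P$. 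I expect the only mildly subtle point to be the correct pairing of the weights $\omega$ and $\omega'$ in the H\"older step and the verification that the finite-dimensional injectivity-implies-surjectivity argument is indeed enabled by the inf-sup, rather than by a coercivity argument that is not available in the $L^p$-weighted setting for $p \neq 2$.
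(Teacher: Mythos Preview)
Your proposal is correct and follows essentially the same perturbative route as the paper: both start from the discrete Laplace inf-sup of Theorem~\ref{thm:StabRitz}, split via $\bI = \tfrac1\Lambda\bA + (\bI - \tfrac1\Lambda\bA)$ (equivalently your $\Lambda\bI = \bA + (\Lambda\bI - \bA)$), bound the remainder by $C_\Delta C_R(1-\alpha/\Lambda)\|\GRAD w_h\|_{\bL^p(\omega,\Omega)}$ using the spectral bounds, and absorb it via the small-oscillation condition; existence/uniqueness and the a~priori estimate are then obtained exactly as you describe.
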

\begin{proof}
Let us begin with the proof of \eqref{eq:DInfSupWithA}. On the basis of Theorem~\ref{thm:StabRitz} we have
\[
  \| \GRAD w_h \|_{\bL^p(\omega,\Omega)} \leq C_\Delta C_R  \sup_{\varphi_h \in \V } \frac{ \int_\Omega \GRAD w_h \cdot \GRAD \varphi_h \diff x}{\| \GRAD \varphi_h \|_{\bL^{p'}(\omega',\Omega)}} \quad \forall w_h \in \V.
\]
From the preceding bound it follows that the following estimate is valid
\begin{multline}
  \| \GRAD w_h \|_{\bL^p(\omega,\Omega)} \leq \frac{C_\Delta C_R}\Lambda \sup_{\varphi_h \in \V } \frac{ \int_\Omega \bA(x) \GRAD w_h \cdot \GRAD \varphi_h \diff x}{\| \GRAD \varphi_h \|_{\bL^{p'}(\omega',\Omega)}} \\ + C_\Delta C_R \sup_{\varphi_h \in \V } \frac{ \int_\Omega (\bI - \tfrac1\Lambda \bA(x) ){\GRAD w_h} \cdot \GRAD \varphi_h \diff x}{\| \GRAD \varphi_h \|_{\bL^{p'}(\omega',\Omega)}} = \mathrm{I} + \mathrm{II}.
  \label{eq:I+II}
\end{multline}
To control the term $\mathrm{II}$, we note that from the spectral inequalities $\alpha \bI \preceq \bA(x) \preceq \Lambda \bI$, which hold for each $x \in \bar{\Omega}$, it follows that
\[
  \left( \frac\alpha\Lambda - 1 \right) \bI \preceq \frac1\Lambda \bA(x) - \bI \preceq \bO, \qquad \forall x \in \Omega,
\]
where $\bO$ is the zero matrix. Consequently,
\[
  \left\| \frac1\Lambda \bA - \bI \right\|_{\calbC(\bar\Omega)}
  = \sup_{x \in \bar{\Omega}} \left| \frac1\Lambda \bA(x) - \bI  \right|
  \leq 1- \frac\alpha\Lambda.
\]
On the basis of this estimate, we can conclude that the term labeled as $\mathrm{II}$ can be bounded as follows:
\[
  \mathrm{II} \leq C_\Delta C_R \left\| \frac1\Lambda \bA - \bI \right\|_{\calbC(\bar\Omega)} \| \GRAD w_h \|_{\bL^p(\omega,\Omega)}
   \leq \frac12 \| \GRAD w_h \|_{\bL^p(\omega,\Omega)},
\]
where we have used \eqref{eq:AsmallOsc}. Replacing this estimate into \eqref{eq:I+II} yields \eqref{eq:DInfSupWithA}.

Finally, we see that the existence and uniqueness of $u_h$ is immediate, as is a dimension--dependent (read $h$) estimate.
The issue is to obtain a uniform estimate in the correct norm. Such an estimate can be obtained as a consequence of \eqref{eq:DInfSupWithA}.
\end{proof}

\begin{remark}[small oscillation]
The small oscillation condition \eqref{eq:AsmallOsc} essentially means that the spread of the eigenvalues of the matrix $\bA$ is globally bounded. As mentioned above, we conjecture that condition \eqref{eq:AsmallOsc} is merely an artifact of our proof, which is of a perturbative nature.
\end{remark}

\subsection{The quasilinear problem}

Finally, we consider a convergent discretization of \eqref{eq:Diening}. Namely, for $p \in (1,\infty)$, $\omega \in A_p$, $\bef \in \bL^p(\omega,\Omega)$, and $g \in L^p(\omega,\Omega)$ we seek for $u_h \in \V$ such that 
\begin{equation}
\label{eq:BIdiscrete}
  \int_\Omega \ba(x,\GRAD u_h) \cdot \GRAD v_h \diff x = \int_\Omega \bef \cdot \GRAD v_h \diff x + \int_\Omega g v_h \diff x \quad \forall v_h \in \V.
\end{equation}
Since this is a finite dimensional problem, the coercivity \eqref{eq:baiscoercive}, the strict monotonicity \eqref{eq:baismonotone}, and the growth \eqref{eq:baisgrowth} assumptions on $\ba$ guarantee the existence and uniqueness of a solution $u_h \in \V$. The important point here is again the uniform estimates and the convergence. This is the content of the next result.

\begin{theorem}[uniform estimates and convergence]
Assume that $\ba$ satisfies the structural assumptions \eqref{item:A}--\eqref{item:F}. Let us also assume that $\bA$ satisfies the small oscillation condition \eqref{eq:AsmallOsc}.  Let $u_h \in \V$ be the solution to \eqref{eq:BIdiscrete}. Then,
\[
  \| \GRAD u_h \|_{\bL^p(\omega,\Omega)} \lesssim 1 +  \| \bef \|_{\bL^p(\omega,\Omega)} + \| g \|_{L^p(\omega,\Omega)}.
\]
Moreover, there exists a nonrelabeled subsequence $\{ u_h \}_{h>0} \subset \V$ such that $u_h \rightharpoonup u$ in $W^{1,p}_0(\omega,\Omega)$ as $h \rightarrow 0$. More importantly, $u$ solves \eqref{eq:Diening} in the sense that \eqref{eq:ToLinearize} holds.
\end{theorem}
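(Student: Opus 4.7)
Following the strategy of the proof of Theorem~\ref{thm:DieningIsWellPosed} at the discrete level, I would rewrite \eqref{eq:BIdiscrete} as the discrete linear equation
\[
\int_\Omega \bA(x)\GRAD u_h \cdot \GRAD v_h \diff x
= \int_\Omega \bef \cdot \GRAD v_h \diff x + \int_\Omega g v_h \diff x
+ \int_\Omega \bigl(\bA(x)\GRAD u_h - \ba(x,\GRAD u_h)\bigr) \cdot \GRAD v_h \diff x,
\]
valid for every $v_h \in \V$. Applying the discrete inf-sup condition \eqref{eq:DInfSupWithA} to this identity, together with the weighted Poincar\'e inequality \eqref{eq:wPoincare} to handle the $g$ term, yields
\[
\|\GRAD u_h\|_{\bL^p(\omega,\Omega)} \lesssim \|\bef\|_{\bL^p(\omega,\Omega)} + \|g\|_{L^p(\omega,\Omega)} + \|\bA\GRAD u_h - \ba(\cdot,\GRAD u_h)\|_{\bL^p(\omega,\Omega)},
\]
with constants independent of $h$. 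To control the perturbation term, fix $\varepsilon>0$ and use \eqref{eq:baisUhlenbeck} to choose $N=N(\varepsilon)$ so that $|\bA(x)\bv - \ba(x,\bv)|\le \varepsilon|\bv|$ whenever $|\bv|\ge N$. Splitting $\Omega = S_N \cup B_N$ with $S_N = \{|\GRAD u_h|\le N\}$: the contribution of $B_N$ is bounded by $\varepsilon \|\GRAD u_h\|_{\bL^p(\omega,\Omega)}$, while \eqref{eq:baisgrowth} and the boundedness of $\bA$ bound the contribution of $S_N$ by $(\|\bA\|_{\calbC(\bar\Omega)}+\Lambda)N\,\omega(\Omega)^{1/p}$. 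Choosing $\varepsilon$ small enough to absorb the $B_N$ contribution into the left-hand side produces the claimed uniform estimate.

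\textbf{Extraction and identification of limits.} The uniform bound and reflexivity of $W^{1,p}_0(\omega,\Omega)$ ($p\in(1,\infty)$, $\omega\in A_p$) extract a nonrelabeled subsequence with $u_h \rightharpoonup u$ in $W^{1,p}_0(\omega,\Omega)$; the weighted Rellich--Kondrachov theorem gives $u_h \to u$ strongly in $L^p(\omega,\Omega)$. By \eqref{eq:baisgrowth}, $\{\ba(\cdot,\GRAD u_h)\}$ is bounded in $\bL^p(\omega,\Omega)$, so on a further subsequence $\ba(\cdot,\GRAD u_h) \rightharpoonup \boldsymbol{\chi}$ weakly therein. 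Testing \eqref{eq:BIdiscrete} with $v_h = \Pi_h \varphi$ for $\varphi \in C_0^\infty(\Omega)$, where $\Pi_h$ is a quasi-interpolant (e.g., Scott--Zhang) with $\Pi_h \varphi \to \varphi$ in $W^{1,\infty}_0(\Omega)$ for smooth targets, and passing to the limit, yields
\[
\int_\Omega \boldsymbol{\chi}\cdot \GRAD \varphi \diff x = \int_\Omega \bef \cdot \GRAD \varphi \diff x + \int_\Omega g \varphi \diff x \qquad \forall \varphi \in C_0^\infty(\Omega).
\]
To identify $\boldsymbol{\chi} = \ba(\cdot,\GRAD u)$ I would invoke Minty's monotonicity trick: for $w \in C_0^\infty(\Omega)$ and $w_h = \Pi_h w$, strict monotonicity \eqref{eq:baismonotone} gives
\[
0 \le \int_\Omega \bigl(\ba(x,\GRAD u_h) - \ba(x,\GRAD w_h)\bigr)\cdot \GRAD(u_h - w_h) \diff x.
\]
Using $v_h = u_h - w_h \in \V$ as a test function in \eqref{eq:BIdiscrete} eliminates the term quadratic in $\GRAD u_h$ in favor of $\int \bef\cdot\GRAD(u_h-w_h)\diff x + \int g(u_h-w_h)\diff x$; passage to the limit (using weak convergence of $\GRAD u_h$ tested against $\ba(x,\GRAD w_h)\in\bL^\infty$, strong convergence of $u_h$ in $L^p(\omega,\Omega)$, and strong convergence $w_h \to w$) produces $\int_\Omega (\boldsymbol{\chi} - \ba(x,\GRAD w))\cdot \GRAD(u-w)\diff x \ge 0$ for every $w \in C_0^\infty(\Omega)$. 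Setting $w = u - \tau z$ with $z \in C_0^\infty(\Omega)$, dividing by $\tau$, and sending $\tau \downarrow 0$ using hemicontinuity of $\ba$ then identifies $\boldsymbol{\chi} = \ba(\cdot,\GRAD u)$ and shows that $u$ solves \eqref{eq:ToLinearize}.

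\textbf{Main obstacle.} The central difficulty is precisely the identification of the weak limit $\boldsymbol{\chi}$, since the weak convergence of $\GRAD u_h$ alone is insufficient to pass to the limit inside the nonlinearity. Minty's trick resolves this, but the execution is subtle because several of the duality pairings appearing in the argument (in particular $\int \bef\cdot\GRAD u_h\diff x$ and $\int g u_h \diff x$) involve factors that both live in $\bL^p(\omega,\Omega)$ or $L^p(\omega,\Omega)$ rather than the standard weighted H\"older pair $\bL^p(\omega)$--$\bL^{p'}(\omega')$; this is the ``very weak'' character of the data. I would handle this exactly as in the regularization step for the linear theory: approximate $\bef$ and $g$ by smooth compactly supported data (for which the pairings are classical and the strong convergence $u_h \to u$ in $L^p(\omega,\Omega)$ paired with boundedness of the approximants in $\bL^\infty$ suffices), and control the approximation error by means of the uniform bound on $\{u_h\}$ in $W^{1,p}_0(\omega,\Omega)$ combined with the reverse H\"older inequality \eqref{eq:reverse_Holder_inequality}. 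The stability and strong approximation of the discrete interpolant $\Pi_h$ in the relevant weighted norms, guaranteed by Theorem~\ref{thm:StabRitz} for the Ritz projection, is what makes the whole scheme compatible with the weighted functional framework.
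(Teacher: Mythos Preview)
Your treatment of the uniform bound is correct and essentially matches the paper: rewrite the discrete equation as a linear problem with matrix $\bA$, apply the discrete inf-sup of Theorem~\ref{thm:DLinearWellPosed}, split into $S_N$ and $B_N$, and absorb. You even streamline the paper a bit, since the paper says the rest ``follows \emph{verbatim} the proof of Theorem~\ref{thm:DieningIsWellPosed}'' (which involves the auxiliary weights $\omega_j$ and extrapolation), whereas in the discrete case $u_h\in\V\subset W^{1,\infty}_0(\Omega)$ already lies in $W^{1,p}_0(\omega,\Omega)$ with finite norm, so the absorption step is legitimate without that machinery.

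The gap is in the identification of the limit. Minty's trick hinges on passing to the limit in
\[
\int_\Omega \bef\cdot\GRAD(u_h-w_h)\,\diff x + \int_\Omega g\,(u_h-w_h)\,\diff x,
\]
obtained by testing \eqref{eq:BIdiscrete} with $u_h-w_h$. These integrals are well defined for each fixed $h$ because $u_h,w_h\in W^{1,\infty}_0(\Omega)$, but neither is a bona fide $\bL^p(\omega)$--$\bL^{p'}(\omega')$ pairing, and weak convergence of $\GRAD u_h$ only in $\bL^p(\omega,\Omega)$ does not let you pass to the limit against $\bef\in\bL^p(\omega,\Omega)$. Your proposed fix---approximate $\bef$ by smooth $\bef_k$ and control $\int_\Omega(\bef-\bef_k)\cdot\GRAD u_h\,\diff x$ via the uniform $W^{1,p}_0(\omega,\Omega)$ bound plus reverse H\"older---does not close this: reverse H\"older only gives $\bef-\bef_k$ small in $L^{1+\delta}(\Omega)$ and $\GRAD u_h$ bounded in $L^{1+\delta}(\Omega)$, which is not a H\"older pair. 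The same issue affects the term with $g$, since $L^p(\omega,\Omega)$--strong convergence of $u_h$ is not dual to $g\in L^p(\omega,\Omega)$. A symptom that something is missing is that your argument never invokes the \emph{strongly} asymptotically Uhlenbeck hypothesis \eqref{item:F}, which the theorem assumes.

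The paper does not spell out the identification step either; it defers to the approximation argument of \cite[Section~4B]{MR3531368} (and \cite[Theorem~15]{MR4408483}), which is designed precisely for this very weak setting. That argument does not run the naive Minty trick with the rough data; rather, it exploits the Uhlenbeck structure (including \eqref{item:F}) to identify the nonlinear weak limit. If you want to make your route work, you would need either a double-limit/diagonal argument (discretize the problem with smooth data $\bef_k,g_k$, identify the limit there, and then show the discrete solutions depend continuously on the data uniformly in $h$---this last step again uses \eqref{item:F}), or to reproduce the compactness argument from the cited reference at the discrete level.
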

\begin{proof}
To obtain the uniform bound, we mimic the proof of Theorem~\ref{thm:DieningIsWellPosed}. In fact, we rewrite the discrete equation \eqref{eq:BIdiscrete} as
\[
  \int_\Omega \bA(x) \GRAD u_h \cdot \GRAD v_h \diff x = \int_\Omega \left[\bef \cdot \GRAD v_h + g v_h \right] \diff x + \int_\Omega \left[ \bA(x) \GRAD u_h - \ba(x,\GRAD u_h ) \right] \cdot \GRAD v_h \diff x
\]
for all $v_h \in \V$, and invoke Theorem~\ref{thm:DLinearWellPosed} to assert that
\[
  \| \GRAD u_h \|_{\bL^p(\omega,\Omega)} \leq \frac{2C_\Delta C_R}\Lambda \left( \| \bef \|_{\bL^p(\omega,\Omega)} + C_P \| g \|_{L^p(\omega,\Omega)} + \| \calF_h \|_{W^{-1,p}(\omega,\Omega)} \right),
\]
where $\calF_h: W^{1,p'}_0(\omega',\Omega) \rightarrow \mathbb{R}$ is defined by
\[
  \langle \calF_h, v \rangle = \int_\Omega \left[ \bA(x) \GRAD u_h - \ba(x,\GRAD u_h ) \right] \cdot \GRAD v \diff x.
\]
The rest of the proof of the estimate follows \emph{verbatim} the proof of Theorem~\ref{thm:DieningIsWellPosed}.

Since the family $\{u_h \}_{h>0}$ is uniformly bounded in $W^{1,p}_0(\omega,\Omega)$, we can extract a weakly convergent subsequence. The proof that this limit is indeed a solution to \eqref{eq:Diening} follows from \cite[Section 4B]{MR3531368}, where the existence is shown by means of an approximation argument; see also \cite[Theorem 15]{MR4408483}. This concludes the proof.
\end{proof}

\bibliographystyle{amsplain}
\bibliography{biblio}

\providecommand{\bysame}{\leavevmode\hbox to3em{\hrulefill}\thinspace}
\providecommand{\MR}{\relax\ifhmode\unskip\space\fi MR }
\providecommand{\MRhref}[2]{%
  \href{http://www.ams.org/mathscinet-getitem?mr=#1}{#2}
}
\providecommand{\href}[2]{#2}
\begin{thebibliography}{10}

\bibitem{Adimurthi2021}
K.~Adimurthi, T.~Mengesha, and N.~C. Phuc, \emph{Gradient weighted norm
  inequalities for linear elliptic equations with discontinuous coefficients},
  Applied Mathematics \& Optimization \textbf{83} (2021), no.~1, 327--371.

\bibitem{MR1198129}
Yu.~A. Alkhutov and V.~A. Kondrat'ev, \emph{Solvability of the {D}irichlet
  problem for second-order elliptic equations in a convex domain},
  Differentsial'nye Uravneniya \textbf{28} (1992), no.~5, 806--818, 917.
  \MR{1198129}

\bibitem{MR2548032}
A.~Ancona, \emph{Elliptic operators, conormal derivatives and positive parts of
  functions}, J. Funct. Anal. \textbf{257} (2009), no.~7, 2124--2158, With an
  appendix by Ha\"{\i}m Brezis. \MR{2548032}

\bibitem{MR2373954}
S.~C. Brenner and L.~R. Scott, \emph{The mathematical theory of finite element
  methods}, third ed., Texts in Applied Mathematics, vol.~15, Springer, New
  York, 2008. \MR{2373954}

\bibitem{MR3531368}
M.~Bul\'\i{\v c}ek, L.~Diening, and S.~Schwarzacher, \emph{Existence,
  uniqueness and optimal regularity results for very weak solutions to
  nonlinear elliptic systems}, Anal. PDE \textbf{9} (2016), no.~5, 1115--1151.
  \MR{3531368}

\bibitem{MR3582412}
M.~Bul\'{\i}\v{c}ek, J.~Burczak, and S.~Schwarzacher, \emph{A unified theory
  for some non-{N}ewtonian fluids under singular forcing}, SIAM J. Math. Anal.
  \textbf{48} (2016), no.~6, 4241--4267. \MR{3582412}

\bibitem{byun2023existence}
S-S. Byun and M.~Lim, \emph{Existence of very weak solutions to nonlinear
  elliptic equation with nonstandard growth and global weighted gradient
  estimates}, arXiv:2311.11479, 2023.

\bibitem{BF03377367}
S-S. Byun and S.~Ryu, \emph{Weighed estimates for nonlinear elliptic problems
  with {O}rlicz data}, Journal of Elliptic and Parabolic Equations \textbf{1}
  (2015), no.~1, 49--61.

\bibitem{RubioDeFrancia}
D.~Cruz-Uribe, J.M. Martell, and C.~P{\'e}rez, \emph{Weights, extrapolation and
  the theory of {R}ubio de {F}rancia}, Operator Theory: Advances and
  Applications, vol. 215, Springer Basel AG, 2011.

\bibitem{MR2790542}
L.~Diening, P.~Harjulehto, P.~H\"{a}st\"{o}, and M.~Ru\v{z}i\v{c}ka,
  \emph{Lebesgue and {S}obolev spaces with variable exponents}, Lecture Notes
  in Mathematics, vol. 2017, Springer, Heidelberg, 2011. \MR{2790542}

\bibitem{diening2023pointwise}
L.~Diening, J.~Rolfes, and A.~J. Salgado, \emph{Pointwise {G}radient {E}stimate
  of the {R}itz {P}rojection}, SIAM J. Numer. Anal. \textbf{62} (2024), no.~3,
  1212--1225. \MR{4748207}

\bibitem{DDO:17}
I.~Drelichman, R.~G. Dur\'{a}n, and I.~Ojea, \emph{A weighted setting for the
  numerical approximation of the {P}oisson problem with singular sources}, SIAM
  J. Numer. Anal. \textbf{58} (2020), no.~1, 590--606. \MR{4060457}

\bibitem{MR1800316}
J.~Duoandikoetxea, \emph{Fourier analysis}, Graduate Studies in Mathematics,
  vol.~29, American Mathematical Society, Providence, RI, 2001, Translated and
  revised from the 1995 Spanish original by David Cruz-Uribe. \MR{1800316}

\bibitem{evans10}
L.~C. Evans, \emph{Partial differential equations}, American Mathematical
  Society, Providence, R.I., 2010.

\bibitem{MR3409135}
L.~C. Evans and R.~F. Gariepy, \emph{Measure theory and fine properties of
  functions}, revised ed., Textbooks in Mathematics, CRC Press, Boca Raton, FL,
  2015. \MR{3409135}

\bibitem{MR0643158}
E.~B. Fabes, C.~E. Kenig, and R.~P. Serapioni, \emph{The local regularity of
  solutions of degenerate elliptic equations}, Comm. Partial Differential
  Equations \textbf{7} (1982), no.~1, 77--116. \MR{643158}

\bibitem{MR1156467}
S.~J. Fromm, \emph{Potential space estimates for {G}reen potentials in convex
  domains}, Proc. Amer. Math. Soc. \textbf{119} (1993), no.~1, 225--233.
  \MR{1156467}

\bibitem{MR0684631}
J.~Garc\'{\i}a-Cuerva, \emph{An extrapolation theorem in the theory of
  {$A\sb{p}$} weights}, Proc. Amer. Math. Soc. \textbf{87} (1983), no.~3,
  422--426. \MR{684631}

\bibitem{MR2491902}
V.~Gol'dshtein and A.~Ukhlov, \emph{Weighted {S}obolev spaces and embedding
  theorems}, Trans. Amer. Math. Soc. \textbf{361} (2009), no.~7, 3829--3850.
  \MR{2491902}

\bibitem{MR3243734}
L.~Grafakos, \emph{Classical {F}ourier analysis}, third ed., Graduate Texts in
  Mathematics, vol. 249, Springer, New York, 2014. \MR{3243734}

\bibitem{MR3396210}
P.~Grisvard, \emph{Elliptic problems in nonsmooth domains}, Classics in Applied
  Mathematics, vol.~69, Society for Industrial and Applied Mathematics (SIAM),
  Philadelphia, PA, 2011, Reprint of the 1985 original [MR0775683].
  \MR{3396210}

\bibitem{MR657523}
M.~Gr\"uter and K.-O. Widman, \emph{The {G}reen function for uniformly elliptic
  equations}, Manuscripta Math. \textbf{37} (1982), no.~3, 303--342.
  \MR{657523}

\bibitem{MR2305115}
J.~Heinonen, T.~Kilpel\"{a}inen, and O.~Martio, \emph{Nonlinear potential
  theory of degenerate elliptic equations}, Dover Publications, Inc., Mineola,
  NY, 2006, Unabridged republication of the 1993 original. \MR{2305115}

\bibitem{JIN2009773}
T.~Jin, V.~Maz'ya, and J.~{Van Schaftingen}, \emph{Pathological solutions to
  elliptic problems in divergence form with continuous coefficients}, Comptes
  Rendus Mathematique \textbf{347} (2009), no.~13, 773--778.

\bibitem{MR0802206}
A.~Kufner, \emph{Weighted {S}obolev spaces}, A Wiley-Interscience Publication,
  John Wiley \& Sons, Inc., New York, 1985, Translated from the Czech.
  \MR{802206}

\bibitem{MENGESHA20112485}
T.~Mengesha and N.~C. Phuc, \emph{Weighted and regularity estimates for
  nonlinear equations on {R}eifenberg flat domains}, Journal of Differential
  Equations \textbf{250} (2011), no.~5, 2485--2507.

\bibitem{MR4408483}
E.~Ot\'{a}rola and A.~J. Salgado, \emph{On the analysis and approximation of
  some models of fluids over weighted spaces on convex polyhedra}, Numer. Math.
  \textbf{151} (2022), no.~1, 185--218. \MR{4408483}

\bibitem{MR3014456}
Tom\'{a}\v{s} Roub\'{\i}\v{c}ek, \emph{Nonlinear partial differential equations
  with applications}, second ed., International Series of Numerical
  Mathematics, vol. 153, Birkh\"{a}user/Springer Basel AG, Basel, 2013.
  \MR{3014456}

\bibitem{MR0745140}
J.~L. Rubio~de Francia, \emph{Factorization theory and {$A\sb{p}$} weights},
  Amer. J. Math. \textbf{106} (1984), no.~3, 533--547. \MR{745140}

\bibitem{ASNSP_1964_3_18_3_385_0}
J.~Serrin, \emph{Pathological solutions of elliptic differential equations},
  Annali della Scuola Normale Superiore di Pisa - Scienze Fisiche e Matematiche
  \textbf{Ser. 3, 18} (1964), no.~3, 385--387 (en). \MR{170094}

\bibitem{MR3169756}
J.~L. Taylor, S.~Kim, and R.~M. Brown, \emph{The {G}reen function for elliptic
  systems in two dimensions}, Comm. Partial Differential Equations \textbf{38}
  (2013), no.~9, 1574--1600. \MR{3169756}

\bibitem{MR1774162}
B.~O. Turesson, \emph{Nonlinear potential theory and weighted {S}obolev
  spaces}, Lecture Notes in Mathematics, vol. 1736, Springer-Verlag, Berlin,
  2000. \MR{1774162}

\bibitem{MR1014685}
W.~P. Ziemer, \emph{Weakly differentiable functions}, Graduate Texts in
  Mathematics, vol. 120, Springer-Verlag, New York, 1989, Sobolev spaces and
  functions of bounded variation. \MR{1014685}

\end{thebibliography}

\end{document}